\DeclareMathAccent{\wtilde}{\mathord}{largesymbols}{"65}
\def\umono{\ar@{_{(}->}[u]}
\def\uumono{\ar@{_{(}->}[uu]}
\def\lmono{\ar@{_{(}->}[l]}
\def\llmono{\ar@{_{(}->}[ll]}
\newcommand{\Z}{{\mathbb Z}}
\newcommand{\eS}{{\mathcal {S}}}
\newcommand{\F}{{\mathbb F}}
\newcommand{\op}{{\mathrm{op}}}
\newcommand{\nil}[1]{\operatorname{nil_{BD}}(#1)}
\newcommand{\nilbg}[1]{\operatorname{nil_{BG}}(#1)}
\newcommand{\epfl}[1]{\operatorname{epfl}(#1)}
\newcommand{\cocat}[1]{\operatorname{cocat}(#1)}
\newcommand{\CW}{\operatorname{CW}}
\renewcommand{\P}{\operatorname{P}}
\newcommand{\A}{\ifmmode{\mathcal{A}}\else${\mathcal{A}}$\fi}
\newcommand{\K}{\ifmmode{\mathcal{K}}\else${\mathcal{K}}$\fi}
\newcommand{\U}{\ifmmode{\mathcal{U}}\else${\mathcal{U}}$\fi}
\newcommand{\T}{\ifmmode{\mathcal{T}}\else${\mathcal{T}}$\fi}
\newcommand{\FF}{\ifmmode{\mathcal{F}}\else${\mathcal{F}}$\fi}
\newcommand{\LL}{\ifmmode{\mathcal{L}}\else${\mathcal{L}}$\fi}
\newtheorem{theorem}{Theorem}[section]
\newtheorem{proposition}[theorem]{Proposition}
\newtheorem{corollary}[theorem]{Corollary}
\newtheorem{lemma}[theorem]{Lemma}
\theoremstyle{definition}
\newtheorem{definition}[theorem]{Definition}
\newtheorem{example}[theorem]{Example}
\theoremstyle{remark}
\newtheorem{remark}[theorem]{Remark}
\def\N{{\mathbb{N}}}
\DeclareMathOperator{\Nil}{{\text{Nil}}}
\title[A Torus Theorem for loop spaces]{A Torus Theorem for homotopy \\ nilpotent loop spaces}
\author{Cristina Costoya, J\'er\^{o}me Scherer, and Antonio Viruel}
\thanks{The authors are supported by Xunta de Galicia grant EM2013/016. The first author is supported by Ministerio de Econom{\'\i}a y Competitividad (Spain), grant MTM2016-79661-P (AEI/FEDER, UE, support included).
The second author is supported by Ministerio de Econom{\'\i}a y Competitividad (Spain), grant MTM2013-42293-P. The third author is supported by Ministerio de Econom{\'\i}a y Competitividad (Spain), grants MTM2013-41768-P and MTM2016-78647-P (AEI/FEDER, UE, support included).}
\thanks{}
\subjclass[2010]{Primary 55P35; Secondary 55P65, 18C10, 55M30}
\keywords{nilpotent, homotopy nilpotent, cocategory, algebraic theory, Goodwillie calculus, excisive functor, $p$-compact group}
\begin{document}


\begin{abstract}
Nilpotency for discrete groups can be defined in terms of central extensions. In this paper, the analogous definition
for spaces is stated in terms of principal fibrations having infinite loop spaces as fibers,  yielding a new invariant
between the classical LS cocategory and the more recent notion of homotopy nilpotency introduced by
Biedermann and Dwyer. This allows us to characterize finite homotopy nilpotent loop spaces in the spirit of Hubbuck's
Torus Theorem, and  obtain corresponding results for $p$-compact groups and $p$-Noetherian groups.
\end{abstract}


\maketitle


\section*{Introduction}
\label{sec:intro}



Hubbuck's Torus Theorem, \cite[Theorem 1.1]{Hubbuck}, characterizes,  up to homotopy, classical homotopy commutative 
finite $H$-spaces as tori, i.e.\ finite products of circles. Mod $p$ versions were established after by Lin \cite[Theorem 1]{MR0808914} 
and Aguad\'e--Smith \cite[Corollary]{AguadeSmith}, and many extensions of this theorem have been obtained by relaxing the finiteness 
conditions. For example, Castellana--Crespo--Scherer \cite[Corollary~7.4]{CCS_1}  proved that connected homotopy commutative 
$H$-spaces with finitely generated cohomology as algebra over the Steenrod algebra,  at the prime~$2$, are homotopically equivalent 
to products of a torus and a Potsnikov piece. In this paper we relax instead the commutativity assumption by considering 
the Biedermann--Dwyer notion of homotopy nilpotent loop spaces and show that tori are the only homotopy nilpotent finite loop spaces.

Biedermann and Dwyer, \cite[Definition 5.4]{MR2580428},  used the stages of the Goodwillie tower
of the identity to provide the first definition of homotopy nilpotency which interpolates
between infinite loop spaces (homotopy  commutative groups or, equivalently, homotopy $1$-nilpotent groups) and loop spaces. Even though this might come as a surprise to those which are more
accustomed to relate the Goodwillie tower of the identity to $v_n$-periodicity, as
discovered by Arone and Mahowald, \cite{MR1669268}, there is a quite straightforward
relationship between the essence of Goodwillie calculus, namely higher excision, and
nilpotency.

In order to explain this let us first go back to group theoretical nilpotency.
The nilpotency of a discrete group is understood either as the minimal number of iterated commutators
that must always vanish, or as the minimal number of central extensions needed to construct the group.
The commutators approach  has been successfully interpreted in homotopy theory  leading to the classical Berstein--Ganea nilpotency \cite{MR0126277}. Work of Hopkins, \cite{MR1017164},
and Rao, \cite{MR1441487}, gives a complete understanding of the classical Berstein--Ganea nilpotency
for compact Lie groups: those with finite nilpotency index are precisely the torsion free ones. This is the case
of the $3$-dimensional sphere, an example which has been studied by Porter in the early 1960's, \cite{MR0169244}.

The central extensions approach will be this article's viewpoint. A homotopy
commutative
(or $1$-nilpotent) group is an infinite loop
space as it should be commutative not only up to homotopy, but up to all higher homotopies.  
Therefore a natural
analogue notion of nilpotency for loop spaces  (i.e. group like spaces) is the following formal translation.   We replace central extensions by principal fibrations whose fiber is an
infinite loop space and,
the invariant which is introduced in this way is the minimal number
of extensions by such principal fibrations needed to construct a given loop space.
We call this invariant the \emph{extension by principal fibrations length}, or epfl for short.  Let us mention that considering a central extension of groups as a principal fibration by taking classifying spaces,  is a classical procedure (see \cite[Lemma~IV.1.12]{MR2035696}).  It appears notably in algebraic K-theory, in Quillen's use of the plus construction,
a point of view adopted for example in \cite{MR0649409}.

The relationship to Goodwillie calculus is provided by the structure of the Goodwillie tower.
Let $F$ be a functor from spaces to spaces and $F\rightarrow P_n F$ denote the
$n$-excisive approximation of $F$, \cite[Section~1]{MR2026544}.
This means that $P_1 F$ is basically a homological functor turning homotopy push-outs into
homotopy pull-backs and more generally $P_n F$  satisfies higher excision, \cite[Definition~2.1]{MR1162445}.
The homotopy fiber $D_n F$ of the natural transformation $P_n F \rightarrow P_{n-1} F$ is
a homogeneous $n$-excisive functor, \cite[Proposition~1.17]{MR2026544}, and Goodwillie showed that it is
classified by a spectrum together with an action of the symmetric group $\Sigma_n$, \cite[Section 5]{MR2026544}.
Concretely $D_n F (X)$ is the infinite loop space corresponding to the $\Sigma_n$-equivariant smash product
of this spectrum with $X^{\wedge n}$. Even better, this fibration of functors is actually principal,
\cite[Lemma~2.2]{MR2026544}.

\medskip
In this paper, we prove the following.\\

\noindent {\bf Theorem~\ref{thm:inequalities}}
{\it
Let $Z$ be a pointed connected space. Then, we have the inequalities
\[
\nilbg {\Omega Z} \leq  \cocat Z  \leq \epfl  Z \leq \nil {\Omega Z}.
\]
}


Here the two nilpotency invariants refer to the classical Berstein--Ganea nilpotency,
$\operatorname {nil_{BG}}$ (see Section \ref{subsec:BG}), and the
Biedermann--Dwyer nilpotency, $\operatorname {nil_{BD}}$ (see Section \ref{subsec:BD}).

We mention finally two problems where our epfl invariant is key to the solution.
First we recover the vanishing of iterated Whitehead products in values of excisive functors.
If $F$ is an $n$-excisive functor from the category of
pointed spaces to pointed spaces and $K$ a finite space, then $\Omega F(K)$ is a homotopy nilpotent
group of class $n$, \cite[Corollary 9.3]{MR2580428}. Thus Theorem~\ref{thm:inequalities} immediately implies that all
$(n+1)$-fold iterated Whitehead  products vanish in $F(K)$. This has been proven originally in \cite{ChoSch}
relying on Goodwillie's generalized Blakers--Massey Theorem and the definition itself of an $n$-excisive functor.
In \cite{Eldred} Eldred obtains another proof by analyzing Goodwillie's
construction of the $n$-excisive approximation $P_nF$ of a functor $F$.
She shows in fact that Whitehead products already vanish in $T_n F$, a functor
directly related to Hopkins' symmetric cocategory, \cite[Definition p.\ 219]{Hopkins}.

The second problem, from which the title of this paper comes from, concerns our understanding of finite homotopy nilpotent loop spaces, 
with which we come back to the first lines of this introduction.
We recall briefly that a loop space $(X, BX)$ consist of a pair of pointed spaces $X$ and $BX$, together with
a homotopy equivalence $e:\Omega BX\simeq X$ defining a loop structure on $X$. The space $BX$ is called the classifying space
of $X$ and $(X, BX)$ is said to be finite if $H^*(X;\Z)$ is finitely
generated as a graded abelian group.  In this paper we offer the following version
of Hubbuck's Torus Theorem for finite loop spaces. Whereas the original statement,
\cite[Theorem 1.1]{Hubbuck}, focuses on homotopy commutative $H$-spaces, we
deal with arbitrary homotopy nilpotent groups.

\medskip
\noindent {\bf Theorem~\ref{thm:A}}
{\it
Let $(X, BX)$ be a connected finite loop space. If $\nil{X}$ is finite, then  $X$ has the homotopy type of a torus.
}

\medskip

We obtain analogous characterizations of Biedermann--Dwyer nilpotency for $p$-compact groups (Theorem \ref{thm:C}) and $p$-Noetherian groups (Corollary \ref{cor:thmB}). Two key ingredients in the proof are, on one hand, the existence of a finite tower of principal fibrations
and, on the other hand,  the effect of Neisendorfer's functor, \cite{MR1321002}, on the fibers of this tower making them contractible (see Section \ref{sec:infiniteloop}).

The result above shows that, in particular,  simple compact Lie groups are not nilpotent loop spaces in the sense of
Bierdermann--Dwyer, in contrast to the classical situation when considering Berstein--Ganea nilpotency where all torsion free
simple Lie groups are nilpotent \cite{Hopkins, MR1441487}. This provides yet more evidence that $\operatorname{nil_{BD}}$
is the correct notion for nilpotency of loop spaces.

\medskip
\noindent {\bf Acknowledgments.} The first and third author would like to acknowledge the hospitality of the EPFL,
where this paper has been completed. We would like to thank Georg Biedermann for sharing his unpublished results
with us, and the referees for their careful reading and constructive comments.
\section{Background and first results}
\label{sec:notation}
In this section we briefly introduce the main ingredients in Theorem~\ref{thm:inequalities}.

\subsection{Nilpotency in the sense of Biedermann--Dwyer}
\label{subsec:BD}
We need some basic notions on algebraic theories to understand the nilpotency in the sense of Biedermann--Dwyer.
Algebraic theories were introduced by Lawvere \cite{MR0158921} to describe algebraic structures, and successfully interpreted in homotopy theory by Badzioch
\cite{MR1923968}. We will need simplicial theories and follow
closely the viewpoint from \cite[Section 3]{MR2580428}.

\begin{definition}
\label{def:algth}An algebraic theory is a small category $T$  whose objects are indexed by the natural numbers
$\{T_{0}, T_{1},\ldots, T_{n},\ldots\}$ such that for $n\in \N$ the $n$-fold categorical coproduct of $T_{1}$ is
naturally isomorphic to $T_{n}$.
\end{definition}
In our situation, the algebraic theory $T$ will be required to be
 pointed and \emph{simplicial}, which means that $T$ is enriched over  the category $\eS_{\ast}$ of pointed simplicial sets.
We distinguish between \emph{strict} and \emph{homotopy} $T$-algebras:
simplicial functors $\underaccent{\wtilde}{X}\colon T^{\op}\to \eS_{\ast}$ taking
coproducts in $T^{\op}$ to products in $ \eS_{\ast}$ strictly or up to homotopy, respectively.

Biedermann and Dwyer define \emph{homotopy
nilpotent groups} as homotopy $\mathcal G_n$-algebras in the category
of pointed spaces, where $\mathcal G_n$ is a simplicial algebraic
theory constructed from the Goodwillie tower of the identity, \cite[Definition 5.4]{MR2580428}.
Concretely, ${\mathcal G_n}$ is the simplicial category which has for each natural number $k \geq 0$ exactly one
object given by $\mathcal G_n (k) = \underset{k}\prod \Omega (P_n (\operatorname{id}))^{\operatorname{inj}}$.
The functor $P_n(\operatorname{id})$ lives in the category of functors from finite pointed spaces to pointed spaces, it is
the $n$-excisive approximation of the identity functor, and
 $(P_n (\operatorname{id}))^{\operatorname{inj}}$ denotes the fibrant replacement in the injective model structure constructed 
 by Lurie in \cite[Proposition A.3.3.2]{MR2522659}, see also Joyal \cite{joyal}, Jardine \cite{jardine} for related work,
 and \cite[Section 4]{MR2580428} for the application to homotopy nilpotent groups.
The simplicial  set of morphisms $ {\mathcal G_n} (k, l)$ is the space of natural transformations
$\underset{k}{\prod}\Omega (P_n (\operatorname{id}))^{\operatorname{inj}} \rightarrow 
\underset{l}{\prod}\Omega (P_n (\operatorname{id}))^{\operatorname{inj}}$.

Hence, a pointed space X is a \emph{homotopy nilpotent group of class $\leq n$} if it is the value at $1$ of a simplicial functor
$\underaccent{\wtilde}{X}$ from $\mathcal G_n $  to pointed spaces, which commutes up to homotopy with products.
In fact, the homotopy nilpotent group is the whole functor $\underaccent{\wtilde}{X}$ and we isolate abusively the space $\underaccent{\wtilde}{X}(1)$, that comes in particular with a loop space structure since there is a structure map
$\underaccent{\wtilde}{X}(1) \times \underaccent{\wtilde}{X}(1) \simeq \underaccent{\wtilde}{X}(2) \rightarrow \underaccent{\wtilde}{X}(1)$ corresponding to the loop space product. We write $\nil X \leq n$ and, we say that $X$ is a \emph{homotopy nilpotent group} if it is of class $\leq n$ for some $n$.
The two extremes of this theory are well understood: loop spaces ($\nil X \leq \infty$) and infinite loop spaces ($\nil X \leq 1$, see \cite[Theorem 5.13]{MR2580428}) can be described as homotopy algebras
over the theories $\mathcal G_{\infty}$ and $\mathcal G_1$ respectively.

 \subsection{Nilpotency in the sense of Berstein--Ganea}
 \label{subsec:BG}
Applying the functor $\pi_0$ to the simplicial algebraic theory $\mathcal G_n$ gives us the ordinary theory of $n$-nilpotent groups,
$\operatorname{Nil}_n$, \cite[Theorem 8.1]{MR2580428}.  Now, product preserving functors to the homotopy category of pointed spaces,
$N \colon \operatorname{Nil_n^{op}} \rightarrow \operatorname {Ho(Spaces_\ast)},$
are, in other words, $\operatorname{Nil}_n$-algebras in  the homotopy category of pointed spaces. These are exactly the $n$-nilpotent
groups in the sense of Berstein--Ganea \cite[Proposition 4.2]{ChoSch}.  Recall that for a loop space $\Omega Z$,
the nilpotency of $\Omega Z$  in the sense of Berstein--Ganea is the least integer $n$ for which the $(n+1)$-st commutator map
$\varphi_{n+1}: {(\Omega Z)}^{n+1} \rightarrow \Omega Z$ is homotopically trivial \cite[Definition 1.7]{MR0126277}; in  this paper we write
$\nilbg {\Omega Z} \leq n$.  We recall here that if such an integer $n$ exists then all the $(n+1)$-fold iterated Whitehead products
vanish in $Z$, \cite[Theorem 4.6]{MR0126277}.

The following example shows that nilpotency in the sense of Berstein--Ganea does not capture
the subtlety of the loop space structure.
\begin{example}
\label{ex:splitting}
Let $f: \mathbb C P^\infty \rightarrow K(\mathbb Z, 6)$ represent the cube
of the fundamental class of the infinite complex projective space, and let $Z$ be the homotopy fiber of $ f$.
The Berstein--Ganea nilpotency of $\Omega Z$ is $1$ \cite[Corollary 5.6]{MV}  but its Biedermann--Dwyer nilpotency is strictly greater
than $1$ since $\Omega Z$ is not an infinite loop space ($Z$ is not an $H$-space  \cite[Lemma 5.2]{MV}). Indeed, even though 
$\Omega Z$ is homotopically equivalent to a product of infinite loop spaces, $\Omega Z \simeq S^1 \times K (\mathbb Z, 4)$,  
\cite[Lemma 5.4]{MV}, the homotopy does not preserve the loop structure.
\end{example}

The next two invariants are recalled in the following paragraph since they are both related to fibrations, and
in some sense, the second one is a refinement of the first.
\subsection{Inductive cocategory and epfl}
\label{sec:epfl}
Different notions of cocategory exist, see for example \cite[Definition 2]{Hovey}, \cite[Definition p.\ 219]{Hopkins}, and \cite[Definition 3.4]{MV}.  
In this paper we concentrate on the inductive cocategory  introduced by Ganea, \cite[Definition 2.1]{Ganea}, concretely, on its normalized version.
Thus, for a connected pointed space $Z$,  $\cocat  Z = 0$ if and only if $Z$ is contractible and, for any $n \geq 1$,  $\cocat Z \leq n$ if there 
exists a fibration $F \rightarrow Y \rightarrow B$ with $\cocat Y \leq n-1$ and $F$ dominates $Z$ (i.e. $Z$ is a retract of $F$);
it is clear from the definition that $\cocat F \leq \cocat Y +1$. It is also clear that $\cocat Z \leq 1$ if and only if $Z$ is dominated by a loop space.

The relation between the cocategory and the nilpotency in the sense of Berstein--Ganea is given by the inequality  $\nilbg {\Omega Z}  \leq \cocat Z$, \cite[Theorem 2.12]{Ganea}.

\medskip
In this paper, we introduce a variant of the previous definition.
\begin{definition}
\label{def:epfl}
We say that  a pointed connected space $Z$ is  \emph{an extension by principal fibrations of length $0$} if and only if $Z$ is contractible and,
for $n \geq 1$, a space $Z$ is an \emph{extension by principal fibrations of length $\leq n$},  if
there exists a tower  of principal fibrations
\begin{equation}\label{epflxy}
\xymatrix{
Z \simeq Z_n  \ar[r] & Z_{n-1} \ar[r] & \dots \ar[r] & Z_1 \ar[r] & Z_0 = \ast \\
F_n \ar[u] & F_{n-1} \ar[u] & & F_1\ar@{=}[u]
}
\end{equation}
where $Z_0$ is a point and all fibers $F_k = \textrm{Fib}(Z_k \rightarrow Z_{k-1})$ are infinite loop spaces.
In that case, we write $\epfl{Z} \leq n$.

\end{definition}

\begin{lemma}
\label{lemma:inequality1}
Let $Z$ be a pointed connected space. Then $\cocat Z \leq \epfl Z$.
\end{lemma}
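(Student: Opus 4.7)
The plan is to proceed by induction on $n = \epfl Z$, using the defining property of a principal fibration to rewrite the top stage of the tower in a form suited to the inductive definition of cocategory.

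For the base case $n = 0$, by definition $Z$ is contractible, so $\cocat Z = 0 \leq \epfl Z$. For the inductive step, assume the inequality holds for all spaces $W$ with $\epfl W \leq n-1$, and suppose $\epfl Z \leq n$. Fix a tower of principal fibrations
\[
Z \simeq Z_n \to Z_{n-1} \to \cdots \to Z_1 \to \star
\]
with infinite loop space fibers $F_k$ witnessing $\epfl Z \leq n$. Truncating at height $n-1$ yields a tower showing $\epfl Z_{n-1} \leq n-1$, so by the induction hypothesis $\cocat Z_{n-1} \leq n-1$.

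Now I use that the topmost fibration $F_n \to Z_n \to Z_{n-1}$ is principal. By definition this means it is pulled back from the universal path-loop fibration over $BF_n$, so it extends one stage to a fibration sequence
\[
Z \simeq Z_n \longrightarrow Z_{n-1} \longrightarrow BF_n.
\]
This is a fibration with total space $Z_{n-1}$ satisfying $\cocat Z_{n-1} \leq n-1$, and whose fiber is $Z$ itself; in particular the fiber trivially dominates $Z$. Applying the inductive clause in the definition of cocategory (with $Y = Z_{n-1}$, $B = BF_n$, $F = Z$) gives $\cocat Z \leq n$, completing the induction.

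The only nontrivial step is the shift from the principal fibration $F_n \to Z \to Z_{n-1}$ to the fibration $Z \to Z_{n-1} \to BF_n$; this is precisely what the principality hypothesis buys. Note that neither the infinite loop structure on $F_n$ nor the full classifying space $BF_n$ of a loop space structure is needed here beyond the bare existence of a delooping of the classifying map, which is built into the definition of principality. Once this shift is made, the induction runs mechanically against the inductive definition of cocategory, so there is no genuine obstacle to overcome.
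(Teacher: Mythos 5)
Your proof is correct and follows essentially the same route as the paper: principality of the top fibration yields the classifying map $Z_{n-1} \to BF_n$ with homotopy fiber $Z$, which combined with the inductive bound $\cocat{Z_{n-1}} \leq n-1$ gives $\cocat Z \leq \cocat{Z_{n-1}} + 1 \leq n$. The paper phrases the induction by iterating the inequality down the tower rather than invoking an induction hypothesis on $\epfl{Z_{n-1}}$, but the content is identical.
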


\begin{proof}
We prove the lemma by induction on $\epfl Z$. If $\epfl Z = 1$, $Z$ is an infinite loop space, so $\cocat Z \leq 1$.
If $\epfl Z\leq n$ for some integer $n >1$, by definition there exists a tower of principal fibrations \eqref{epflxy}
 where the fibration
$Z \rightarrow Z_{n-1}$ is classified by  $\theta\colon Z_{n-1} \rightarrow B F_{n}$, that is, $Z$ is the homotopy fiber of $\theta$.
Hence, $\cocat Z\leq \cocat {Z_{n-1}} + 1$ and, by induction, we get that $\cocat Z \leq \cocat {\ast} + n =n$.
\end{proof}

We are now ready to prove the main theorem in this section. We exploit a characterization of homotopy nilpotent groups
through excisive functors. For if $F$ is  an  $n$-excisive functor (so $F$ sends strongly homotopy co-Cartesian $(n+1)$-cubes
to homotopy Cartesian ones, and also $F \simeq P_nF$ by \cite[Theorem~1.8]{MR2026544})
 then, for every finite space $K$, $\Omega F (K)$ is a homotopy nilpotent group \cite[Corollary 9.3]{MR2580428}.
 Even better,  Biedermann shows that every functor $\underaccent{\wtilde}{X}$ associated to a homotopy nilpotent group
 of class $\leq n$ is of the form $\Omega F$ where $F$ is $n$-excisive, \cite{Biedermann}.

\medskip
 We prove the following:

\begin{theorem}
\label{thm:inequalities}
Let $Z$ be a pointed connected space. Then, we have the inequalities
\[
\nilbg {\Omega Z} \leq  \cocat Z  \leq \epfl  Z \leq \nil {\Omega Z}.
\]
\end{theorem}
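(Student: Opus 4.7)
The plan is to establish each of the three inequalities separately. The leftmost inequality $\nilbg{\Omega Z} \leq \cocat Z$ is the classical result of Ganea recorded in Section~\ref{subsec:BG}, so no work is required there. The middle inequality $\cocat Z \leq \epfl Z$ is Lemma~\ref{lemma:inequality1}, which has just been established.

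All the substance therefore lies in proving $\epfl Z \leq \nil{\Omega Z}$. Assume $\nil{\Omega Z} \leq n$, so that $\Omega Z$ is a homotopy nilpotent group of class $\leq n$. Invoking the Biedermann--Dwyer characterization cited at the end of Section~\ref{sec:epfl}, the simplicial functor $\tilde X$ on $\mathcal{G}_n$ encoding $\Omega Z$ takes the form $\Omega F$ for some $n$-excisive functor $F$ from pointed spaces to pointed spaces. Evaluating at the distinguished generator $\mathcal{G}_n(1) = \Omega(P_n(\operatorname{id}))^{\operatorname{inj}}$ identifies $\Omega Z$, as a loop space, with $\Omega F(K)$ for this particular $K$, and since both $Z$ and $F(K)$ are connected this promotes to an equivalence $Z \simeq F(K)$.

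Because $F$ is $n$-excisive, its Goodwillie tower terminates after $n$ steps, giving the tower of fibrations
\[
Z \simeq F(K) \simeq P_n F(K) \to P_{n-1} F(K) \to \cdots \to P_1 F(K) \to P_0 F(K) = \ast.
\]
Each layer $D_k F(K) = \hofib(P_k F(K) \to P_{k-1} F(K))$ is, by Goodwillie's classification \cite{MR1162445}, an infinite loop space (concretely $\Omega^\infty$ of the $h\Sigma_k$-smash of the $k$-th derivative of $F$ with $K^{\wedge k}$), and each of the fibrations $P_k F(K) \to P_{k-1} F(K)$ is principal by the result of \cite{MR2026544} already recalled in the Introduction. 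This tower exhibits $\epfl Z \leq n$, completing the argument.

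The step I expect to require the most care is extracting from the Biedermann--Dwyer correspondence an honest equivalence of spaces $Z \simeq F(K)$ rather than merely an equivalence of the underlying loop spaces; unwinding the fibrant replacement in the injective model structure used to define $\mathcal{G}_n$ is the delicate part, but the connectedness of $Z$ (whence $Z \simeq B\Omega Z$) together with the connectedness of $F(K)$ should suffice to descend the loop-level equivalence to a space-level one.
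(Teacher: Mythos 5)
Your argument is correct and follows essentially the same route as the paper's own proof: the first inequality is Ganea's theorem, the second is Lemma~\ref{lemma:inequality1}, and the third uses the Biedermann--Dwyer realization of a homotopy nilpotent group of class $\leq n$ as $\Omega F(K)$ for an $n$-excisive $F$, followed by the Goodwillie tower of $F$ with its infinite-loop-space layers and principal fibrations from \cite{MR2026544}. The delicate point you flag --- promoting the loop-space equivalence $\Omega Z \simeq \Omega F(K)$ to a space-level equivalence $Z \simeq F(K)$ via connectedness and delooping --- is exactly the step the paper passes over silently, so your treatment is, if anything, slightly more careful.
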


\begin{proof}
In view of the previous paragraphs, nothing needs to be done for the first two inequalities.
The first one is \cite[Theorem 2.12]{Ganea} and the second one is Lemma~\ref{lemma:inequality1}.
Suppose that $\Omega Z$  is a homotopy nilpotent group of class $\leq n$. Then, by \cite{Biedermann},
there exists an $n$-excisive functor $F$ and a space $K$
such that $\Omega Z$ and $\Omega F (K)$ are weakly equivalent as loop spaces.
Therefore we have an equivalence $Z \simeq F(K)$.
Now, as we mentioned in the Introduction, the Goodwillie tower for $F \simeq P_nF$  yields a tower
\[
F (K) \simeq P_n F (K) \rightarrow P_{n-1} F (K)  \rightarrow \cdots \rightarrow P_1 F (K) \rightarrow \ast
\]
whose fibers $D_k F (K)$ are infinite loop spaces, and $P_k F (K) \rightarrow P_{k-1}F (K)$ are principal
fibrations classified by  $P_{k-1}F (K) \rightarrow BD_k F (K)$, \cite[Lemma~2.2]{MR2026544}. This directly implies that $\epfl Z \leq n$.
\end{proof}

When $\Omega Z$ is not only a loop space, but an infinite loop space, all inequalities are indeed equalities. This
comes from the fact that homotopy $1$-nilpotent groups are infinite loop spaces \cite[Theorem 5.13]{MR2580428}.

\begin{corollary}
\label{prop:equalities}
Let $X$ be an infinite loop space. Then $\nilbg {X} = \nil {X} = 1$.
\end{corollary}

This approach leads us to an alternate and more direct proof of a result obtained in \cite[Theorem~2.1]{ChoSch}
(see also Eldred's point of view, \cite[Corollary~4.3]{Eldred}).

\begin{corollary}
\label{thm:whiteheadproductsvanish}
Let $F$ be any $n$-excisive functor from the category of
pointed spaces to pointed spaces. Then all $(n+1)$-fold iterated
Whitehead products vanish in $F(K)$ for every finite space $K$.
\end{corollary}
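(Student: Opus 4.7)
The plan is to chain together three results already gathered in this section, with Theorem~\ref{thm:inequalities} as the crucial bridge. Given an $n$-excisive functor $F$ and a finite pointed space $K$, the first step is to set $Z = F(K)$ and invoke \cite[Theorem 9.2]{MR2580428}, which says that $\Omega F(K)$ is a homotopy nilpotent group of class $\leq n$ in the sense of Biedermann--Dwyer. In the notation of the paper this means $\nil{\Omega Z} \leq n$.

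Next, I would apply Theorem~\ref{thm:inequalities}, the main result of this section, to transfer this bound to the classical Berstein--Ganea nilpotency: namely $\nilbg{\Omega Z} \leq \nil{\Omega Z} \leq n$. At this point the loop-space structure is no longer needed; all that remains is a statement about $Z$ itself.

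Finally, I would cite the Berstein--Ganea theorem recalled in Subsection~\ref{subsec:BG}, namely \cite[Theorem 4.6]{MR0126277}: if $\nilbg{\Omega Z} \leq n$, then every $(n+1)$-fold iterated Whitehead product in $Z$ vanishes. Substituting back $Z = F(K)$ gives the desired conclusion. There is no real obstacle here; the corollary is essentially a repackaging of Theorem~\ref{thm:inequalities} once one remembers the two classical facts above, and this is precisely what makes it a more conceptual alternative to the direct computational proofs in \cite{ChoSch} and \cite{Eldred}.
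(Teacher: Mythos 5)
Your argument is correct and is essentially identical to the paper's own proof: both invoke \cite[Theorem 9.2]{MR2580428} to get $\nil{\Omega F(K)} \leq n$, pass through Theorem~\ref{thm:inequalities} to deduce $\nilbg{\Omega F(K)} \leq n$, and conclude via the Berstein--Ganea result recalled in \S\ref{subsec:BG}. Nothing further is needed.
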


\begin{proof}
Since $F$ is an $n$-excisive functor, $\Omega F (K)$ is  a homotopy nilpotent group of class $\leq n$,  \cite[Theorem 9.2]{MR2580428}.
Thus $\nilbg  {\Omega F(K)}\leq n$ by Theorem~\ref{thm:inequalities} and all the $(n+1)$-fold iterated Whitehead products vanish
in $F(K)$ (see \S\ref{subsec:BG}).
\end{proof}

This application
highlights the simplicity of the arguments when using the relationship between the classical notions of nilpotency,
$\operatorname{nil_{BG}}$ and $\operatorname{cocat}$,  and the more recent ones,
$\operatorname{epfl}$ and $\operatorname{nil_{BD}}$.

\begin{remark}
\label{rem:better}
{\rm The upper bound we obtain in Corollary \ref{thm:whiteheadproductsvanish} is a crude estimate.  In  \cite[Theorem~4.2]{ADL} Arone, Dwyer, and Lesh show that any $(2n-1)$-excisive space-valued functor $F$,  for which $P_{n-1} F \simeq \ast$, takes its values in infinite loop spaces.  Therefore, since $\operatorname{nil_{BG}} F (K) \leq 1$,  all Whitehead products vanish in  $F(K)$,  whereas by Corollary \ref{thm:whiteheadproductsvanish}  we would only obtain that the $2n$-fold iterated Whitehead products vanish.
}
\end{remark}

We end this section by showing that all nilpotency notions coincide for discrete groups.
In order to do so, we use the relation of the simplicial algebraic theory $\mathcal G_n$ with the set-valued theory of ordinary
$n$-nilpotent groups~$\Nil_n$.
\begin{theorem}
\label{thm:solo-una-nilpotencia-para-grupos}
Let $G$ be a discrete nilpotent group. Then $$\nilbg G =\cocat {BG}
=\epfl {BG} =\nil G.$$
\end{theorem}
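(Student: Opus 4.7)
The plan is to close the loop of inequalities from Theorem~\ref{thm:inequalities}. Since $G$ is discrete, $\Omega BG\simeq G$ as loop spaces, so applying Theorem~\ref{thm:inequalities} to $Z=BG$ gives immediately
\[
\nilbg G \leq \cocat{BG} \leq \epfl{BG} \leq \nil G.
\]
It therefore suffices to establish the reverse bound $\nil G \leq \nilbg G$, which will force all four invariants to agree.

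Suppose $\nilbg G \leq n$, so that $G$ is classically $n$-nilpotent, equivalently a set-valued $\Nil_n$-algebra (as recalled in Section~\ref{subsec:BG}). The strategy is to promote this discrete algebra structure to a simplicial $\mathcal G_n$-algebra structure on the discrete pointed space $G$. The two ingredients are the identification $\pi_0\mathcal G_n\cong \Nil_n$ from \cite[Theorem 8.1]{MR2580428} and the elementary observation that, in $\eS_\ast$, the pointed mapping space between two constant (discrete) simplicial sets is itself discrete.

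Concretely, I would define a simplicial functor $\tilde G\colon \mathcal G_n^{\op}\to \eS_\ast$ on objects by $\tilde G(k) = G^k$, viewed as a constant pointed simplicial set. On morphisms, one needs for each pair $(k,l)$ a map of simplicial sets $\mathcal G_n(k,l)\to \map_\ast(G^k, G^l)$; since the target is discrete, such a map factors through $\pi_0$ and is the same datum as a map of sets $\Nil_n(k,l)\to \map_\ast(G^k, G^l)$, which is exactly what the $\Nil_n$-algebra structure on $G$ provides. Functoriality is inherited from functoriality of $\pi_0$ combined with functoriality of the $\Nil_n$-action, and products are strictly preserved on discrete objects, hence preserved up to homotopy. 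Thus $\tilde G$ is a homotopy $\mathcal G_n$-algebra, and its binary operation $\tilde G(1)\times \tilde G(1)\simeq \tilde G(2)\to \tilde G(1)$ agrees with the group multiplication on $G$, exhibiting $G$ as the underlying loop space. This yields $\nil G\leq n$, as required.

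The only point requiring care is the verification that the recipe above produces a genuine simplicially enriched functor rather than merely a functor on $\pi_0$; but because every hom-space in $\eS_\ast$ arising here is discrete, the simplicial enrichment condition automatically collapses to ordinary functoriality, so no obstruction appears. In fact the same discreteness argument shows that homotopy product preservation and strict product preservation coincide in this setting, so both possible readings of \textquotedblleft homotopy $\mathcal G_n$-algebra\textquotedblright{} give the same conclusion.
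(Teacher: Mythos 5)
Your proposal is correct and follows essentially the same route as the paper: both deduce the chain $\nilbg G \leq \cocat{BG} \leq \epfl{BG} \leq \nil G$ from Theorem~\ref{thm:inequalities} and close the loop by promoting the set-valued $\Nil_n$-algebra $k\mapsto G^k$ to a simplicial $\mathcal G_n$-algebra via the identification $\Nil_n\cong\pi_0\mathcal G_n$ and the discreteness of the relevant mapping spaces. Your extra remark that maps into a discrete simplicial set factor through $\pi_0$ is precisely the (tacit) justification the paper gives for the simplicial extension, so there is no substantive difference.
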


\begin{proof}
Let $G$ be a nilpotent group of class $n$, i.e. $n=\nilbg G$. There exists then a set valued $\Nil_n$-algebra given by the
functor
$\underaccent{\wtilde}{G}\colon \Nil_n\rightarrow Sets_*$ where $\underaccent{\wtilde}{G}(k)=G^k$.
Considering the isomorphism of categories $\Nil_n\cong\pi_0{\mathcal G}_n$
\cite[Theorem 8.1]{MR2580428} and viewing pointed sets as a full
subcategory of ${\mathcal S}_*$,  shows that $G$ is a homotopy nilpotent
group of class $\leq n$ since the functor $\underaccent{\wtilde}{G}\circ\pi_0 \colon {\mathcal G}_n\rightarrow
{\mathcal S}_*$ extends to a simplicial functor as all mapping spaces between discrete groups are discrete.
It is therefore an homotopy ${\mathcal G}_n$-algebra and
\[
n=\nilbg G \leq\cocat {BG} \leq\epfl {BG} \leq\nil G \leq n.
\]

\end{proof}

\begin{remark}
We would be remiss in not saying a word about examples of spaces for which these inequalities are sharp. All of our attempts have been unsuccessful to find an example of a space $X$  for which
$\epfl X$ is strictly less than $\nil {\Omega X}$.

\end{remark}



\section{Towers of principal fibrations and Neisendorfer's type functor}
\label{sec:infiniteloop}
The aim of this section is to show that the effect of certain homotopical loca\-lization functors on loop spaces with
finite epfl, see Definition \ref{def:epfl}, is predictable. We start with a short subsection where we fix the notation and
terminology about localization and cellularization. Most of this is taken from the first chapters in~\cite{Dror}. It is convenient
to work here in the category of simplicial sets (which we call spaces). From now on $p$ denotes a prime number.

\subsection{Localization and cellularization}
\label{subsec:loc}
Let $f\colon A \rightarrow B$ be a map. A space $X$ is $f$-\emph{local} if $\hbox{\rm map}(f, X)$ is a weak equivalence.
A map $g$ is an $f$-\emph{local equivalence} if $\hbox{\rm map}(g, X)$ is a weak equivalence for all $f$-local spaces $X$.
There exists a coaugmented homotopy idempotent functor $L_f$ called $f$-localization,  \cite[Theorem 1.A.3]{Dror},  such that the coaugmentation
map $\eta: X \rightarrow L_f X$ is an $f$-local equivalence to an $f$-local space.

\begin{example}
\label{example:nullification}
When $f$ is a map of the form $A \rightarrow \ast$, one traditionally writes $\P_A$ for the localization functor $L_f$. This
functor is called $A$-nullification. It turns $A$ into a point and one says that $\P_A$ kills $A$.
For $A=S^{n+1}$ one obtains a functorial construction of the $n$-th Postnikov
section since $\P_{S^{n+1}} X \simeq X[n]$, \cite[Example~1.A.1.1]{Dror}. We will also be interested in $B \mathbb Z/p$-nullification.
\end{example}

\begin{example}
\label{example:classical2}
Let $f$ be a universal $H \mathbb F_p$-equivalence, meaning that $f$ is a wedge of all maps between countable simplicial
sets that induce an isomorphism in mod $p$ homology. Then $L_f$ is mod $p$ homological localization, \cite[Example~1.E.4]{Dror}.
The effect of $L_f$ on nilpotent spaces is, up to homotopy, Bousfield--Kan $p$-completion, \cite{MR0365573}.
\end{example}

We turn now to the description of the localization functor we need. We combine $B \mathbb Z/p$-nullification and $H \mathbb F_p$-localization so as to obtain a functor that ``kills" $B \mathbb Z/p$ and ``inverts" all mod $p$ homology equivalences.

\begin{definition}
\label{def:Neisendorfer}
Let $f: B \mathbb Z/p \rightarrow \ast$ and $g$ be a universal $H \mathbb F_p$-equivalence. We set $L = L_{f \vee g}$. \end{definition}

The reason behind this definition is that $L$ is a homotopy idempotent version of Neisendorfer's functor
$(\P_{B \mathbb Z/p} (-))^\wedge_p$, \cite[Section~1]{MR1321002},  $B \mathbb Z/p$-nullification followed by $p$-completion. Note that, by the universal properties of the functors, we have natural transformation of coaugmented functors $(\P_{B \mathbb Z/p} (-))^\wedge_p \rightarrow L $. This is induces a weak equivalence $\xymatrix{(\P_{B \mathbb Z/p} X)^\wedge_p  \ar[r]^-{\sim} & L X}$ whenever $\P_{B \mathbb Z/p} X$ is a nilpotent space since in that case we know that $(\P_{B \mathbb Z/p} X)^\wedge_p $ is $B \mathbb Z/p$-null.

\begin{remark}
\label{rem:Neisendorfer}
If $X$ is a connected infinite loop space with a torsion
fundamental group, then $L X$ is contractible. Indeed,
McGibbon proved that for such spaces,
$(\P_{B \mathbb Z/p} X)^\wedge_p$
is contractible, \cite[Theorem~2]{MR1371135}.  As  infinite loop spaces are nilpotent, we have also that $\P_{B \mathbb Z/p} X$  is nilpotent  and therefore,  by the comments above $ L X \simeq (\P_{B \mathbb Z/p} X)^\wedge_p  \simeq \ast  $.  In general however $L$ and
$(\P_{B \mathbb Z/p} (-))^\wedge_p$
differ.
\end{remark}

The functorial nature of homotopy localization is quite powerful, as is illustrated by the following central property.

\begin{theorem}\cite[Theorem~1.H.1]{Dror}
\label{prop:fiberwise}
If  $F \rightarrow E {\buildrel p \over \longrightarrow} B$ is a fibration and $L_f F \simeq \ast$, then $L_f (p) : L_f E \rightarrow L_f B$ is a homotopy equivalence.
\end{theorem}
We end this subsection by introducing cellularization functors. We fix a pointed space $A$. A pointed space $X$ is
$A$-\emph{cellular} if it belongs to the smallest class of pointed spaces containing $A$ and closed under weak equivalences
and pointed homotopy colimits, \cite[Definition~2.D.1]{Dror}. A pointed map $f$ is an $A$-\emph{equivalence} if
$\hbox{\rm map}_*(A, f)$ is a weak equivalence. There exists an augmented homotopy idempotent functor $\CW_A$ called $A$-cellularization
such that the augmentation map $\epsilon: \CW_A X \rightarrow X$ is an $A$-equivalence from an $A$-cellular space, \cite[Theorem~2.B.3]{Dror}.

\begin{example}
\label{example:covers}
When $A= S^{n+1}$, then $\CW_{S^{n+1}} X$ is a functorial analogue of $X \langle n \rangle$, the $n$-connected cover of $X$, \cite[Example~2.D.2.6]{Dror}.
Thus $\CW_{S^{n+1}} X$ coincides with the homotopy fiber of the coaugmentation
$X \rightarrow \P_{S^{n+1}} X$.
In particular, when $n=1$, we get
a functorial construction of the universal cover.
\end{example}

Farjoun shows in \cite[Theorems 3.A.1 and 3.A.2]{Dror} that localization and cellularization functor behave well with respect to loop space structures.
His statements and the proofs are more precise than the following theorem, which will be sufficient for us.

\begin{theorem}[Farjoun, \cite{Dror}]
\label{thm:localizeloop}
For any map $f$ and any pointed space $A$, the coaugmentation $\Omega X \rightarrow L_f \Omega X$ and
the augmentation $\CW_A \Omega X \rightarrow \Omega X$ are homotopic to loop maps, i.e. maps between loop spaces
preserving the loop space structure.
\end{theorem}


\subsection{The effect of $L$ on certain loop spaces}
\label{subsec:effect}
We work with the functor $L$ introduced in Definition~\ref{def:Neisendorfer}.
We first show that the effect of $L$ on loop spaces can be seen in the universal cover and second,
that the epfl of the universal cover is not greater than that of the space.

\begin{lemma}
\label{lem:cover}
Let $X$ be a connected space such that $\pi_1 X$ is a finite group, and let $X \langle 1 \rangle$ be the universal cover of $X$.
If $L( X \langle 1 \rangle)$ is contractible, then so is $LX$.
\end{lemma}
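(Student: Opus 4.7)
The plan is to apply fiberwise $L$-localization to the universal cover fibration
\[
X\langle 1 \rangle \longrightarrow X \longrightarrow K(\pi_1 X, 1),
\]
thereby reducing the problem to showing that $L$ annihilates $K(\pi_1 X,1)$, which in turn will follow from Remark~\ref{rem:Neisendorfer}.

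First, I would invoke the machinery of fiberwise localization from Dror Farjoun's Section~1.F to produce a new fibration
\[
L(X\langle 1 \rangle) \longrightarrow \bar X \longrightarrow K(\pi_1 X, 1)
\]
together with a natural map $X \to \bar X$ over the base which is an $L$-equivalence. Since $L(X\langle 1 \rangle)\simeq \star$ by hypothesis, the total space $\bar X$ is weakly equivalent to the base, so
\[
L(X) \simeq L(\bar X) \simeq L(K(\pi_1 X, 1)).
\]

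Next, I would observe that $\pi_1 X$ is abelian (since $X$ is a loop space) and $p$-torsion by hypothesis, so $K(\pi_1 X,1)$ is a connected infinite loop space with $p$-torsion fundamental group. By Remark~\ref{rem:Neisendorfer}, $L$ coincides on such a space with Neisendorfer's functor $(P_{B\mathbb{Z}/p}(-))^\wedge_p$, and McGibbon's theorem recalled there says the latter is contractible. Combining these gives $L(X)\simeq \star$.

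The main obstacle I anticipate is the justification of the fiberwise step: one must check that the natural map from $X$ to its fiberwise $L$-localization is genuinely an $L$-equivalence, even though the base $K(\pi_1 X,1)$ is not simply connected. This is precisely the kind of situation Dror Farjoun's framework is designed to handle, and is aided by the fact (highlighted in Remark~\ref{rem:Neisendorfer}) that $L$ is a properly idempotent Bousfield localization, unlike Neisendorfer's functor itself.
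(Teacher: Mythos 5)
Your proof is correct and follows essentially the same route as the paper: apply fiberwise $L$-localization to the universal cover fibration $X\langle 1\rangle \to X \to B\pi_1 X$ to reduce the claim to $L(B\pi_1 X)\simeq\star$. The only (harmless) difference is in that last step, where the paper concludes directly from $P_{B\mathbf{Z}/p}(B\pi_1 X)\simeq\star$ for a $p$-torsion group, whereas you route through Remark~\ref{rem:Neisendorfer} and McGibbon's result on infinite loop spaces, using the additional observation that $\pi_1 X$ is abelian.
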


\begin{proof}
Consider the fibration sequence $X \langle 1 \rangle \rightarrow X\rightarrow B \pi_1 X$ given by the universal cover.
As we assume that $L (X \langle 1 \rangle)$ is contractible, Theorem~\ref{prop:fiberwise} implies that
$L X \simeq L(B\pi_1 X)$. But $\P_{B \mathbb Z/p} (B \pi_1 X)$ is contractible by \cite[Lemma 6.6]{MR1622342},
hence so are $L(B\pi_1 X)$ and $L X$.
\end{proof}

\begin{lemma}
\label{lem:fiber}
Let $f\colon Z \rightarrow Y$ be a map of connected spaces whose homotopy fiber $F$ is an infinite loop space. Then
the homotopy fiber of the induced map on universal covers $f\langle 1 \rangle\colon Z \langle 1 \rangle  \rightarrow Y \langle 1 \rangle$ is an infinite
loop space as well. Moreover, if $f$ is a principal fibration, then so is $f \langle 1 \rangle$.
\end{lemma}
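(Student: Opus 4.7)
The plan is to identify $\tilde F=\hofib(f\langle 1\rangle)$ as a connected cover of the original fibre $F$, to transfer the infinite loop space structure along this cover, and finally to produce the classifying map for $f\langle 1\rangle$. I start with the evident map of fibration sequences
\[
\xymatrix{
\tilde F \ar[r] \ar[d] & Z\langle 1\rangle \ar[r] \ar[d] & Y\langle 1\rangle \ar[d] \\
F \ar[r] & Z \ar[r] & Y
}
\]
whose two rightmost vertical arrows are universal covers. A comparison of long exact sequences shows that $\tilde F$ is connected, that $\tilde F\to F$ is an isomorphism on $\pi_n$ for $n\geq 2$, and that $\pi_1\tilde F$ is naturally the image of the connecting homomorphism $\pi_2 Y\to\pi_1 F$, equivalently the subgroup $\Gamma:=\ker(\pi_1 F\to\pi_1 Z)\subseteq\pi_1 F$. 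Hence $\tilde F$ is precisely the connected cover of $F$ associated to $\Gamma$.

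To endow $\tilde F$ with an infinite loop space structure, I would write $F\simeq\Omega^\infty E$ for a connective spectrum $E$ with $\pi_0 E=0$ and use the Postnikov tower of $E$ to construct a connective spectrum $E'$ together with a map $E'\to E$ such that $\pi_1 E'=\Gamma$, $\pi_n E'=\pi_n E$ for $n\geq 2$, and $\pi_1 E'\to\pi_1 E$ is the inclusion $\Gamma\hookrightarrow\pi_1 E$. Then $\Omega^\infty E'$ is a connected cover of $F$ with fundamental group $\Gamma$, and by uniqueness of connected covers it is equivalent to $\tilde F$; the infinite loop space structure is thereby inherited from $E'$.

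For the principal statement, suppose $f$ is classified by $c\colon Y\to BF$. The cofibre of $E'\to E$ has homotopy concentrated in degree $1$, isomorphic there to $\pi_1 F/\Gamma$, so delooping once yields a fibre sequence of infinite loop spaces $B\tilde F\to BF\to K(\pi_1 F/\Gamma,2)$. The task reduces to lifting the composite $c_1\colon Y\langle 1\rangle\to Y\to BF$ across $B\tilde F$; the obstruction is the composite $Y\langle 1\rangle\to BF\to K(\pi_1 F/\Gamma,2)$, which is null-homotopic because $Y\langle 1\rangle$ is simply connected and its effect on $\pi_2$ is the reduction modulo $\Gamma$ of $\pi_2 Y\to\pi_1 F$, zero by construction of $\Gamma$. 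A lift $\tilde c\colon Y\langle 1\rangle\to B\tilde F$ therefore exists; its homotopy fibre $\tilde Z$ is a principal $\tilde F$-fibration over $Y\langle 1\rangle$, and a brief long-exact-sequence computation gives $\pi_1\tilde Z=0$. The fact that $\tilde c$ lifts $c_1$ provides a map $\tilde Z\to Z$, which is a covering of $Z$ with simply connected total space and hence equivalent to $Z\langle 1\rangle$, exhibiting $f\langle 1\rangle$ as the principal fibration classified by $\tilde c$. The step I expect to be most delicate is precisely this obstruction-theoretic lifting, where one must verify that the infinite loop space structure on $\tilde F$ produced above is genuinely compatible with the classifying data of $f$.
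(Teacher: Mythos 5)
Your treatment of the first assertion is essentially the paper's: both identify $\hofib(f\langle 1\rangle)$ as the covering of the basepoint component of $F$ corresponding to the subgroup $\Gamma=\ker(\pi_1F\to\pi_1Z)=\operatorname{im}(\pi_2Y\to\pi_1F)$, and your spectrum-level construction of $E'$ merely makes explicit the paper's one-line claim that such a covering is again an infinite loop space. Where you genuinely diverge is the ``moreover'' clause: the paper disposes of it by citing the general results of Dwyer and Farjoun on localization and cellularization of principal fibrations \cite{MR2581908}, whereas you give a self-contained obstruction-theoretic argument, producing the classifying map $\tilde c\colon Y\langle 1\rangle\to B\tilde F$ explicitly from the fibre sequence $B\tilde F\to BF\to \Omega^\infty\bigl(\operatorname{cofib}(E'\to E)[1]\bigr)$ and the simple connectivity of $Y\langle 1\rangle$. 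Your route is more informative and keeps the proof elementary; the citation buys brevity and sidesteps the delooping bookkeeping. One detail needs correcting: the lemma does not assume $F$ connected (the paper explicitly notes that $F$ is ``a non-connected space in general''), so you may not take $\pi_0E=0$, and the cofibre of $E'\to E$ then has homotopy in degrees $0$ and $1$ (namely $\pi_0F$ and $\pi_1F/\Gamma$), not only in degree $1$. This does not derail the lifting: the target of the obstruction is a two-stage Postnikov piece with $\pi_1=\pi_0F$ and $\pi_2=\pi_1F/\Gamma$, the composite from the simply connected space $Y\langle 1\rangle$ lifts to its universal cover $K(\pi_1F/\Gamma,2)$, and there it is detected on $\pi_2$, where it vanishes by your computation. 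With that adjustment, and granting the standard compatibility of the delooping of $\tilde F$ with that of $F$ (which your construction of $E'\to E$ supplies), the argument is complete.
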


\begin{proof}
Consider the horizontal ladder of fibration sequences
\[
\xymatrix{
\textrm{Fib} ( f \langle 1 \rangle)  \ar[r]\ar[d]_\psi & Z \langle 1 \rangle \ar[r]^-{f \langle 1 \rangle}\ar[d] & Y \langle 1 \rangle\ar[d] \\
F  \ar[r] & Z  \ar[r]^f & Y ,
}
\]
and the associated ladder of long exact homotopy sequences. By the Five Lemma, we obtain that
$\pi_r(\psi)$ is an isomorphism for $r\geq 2$, while the Snake Lemma shows that
$\pi_1(\psi)$ is a monomorphism. Moreover, since $\textrm{Fib} ( f \langle 1 \rangle)$ is connected,
the range of $\psi$ is in $F_c$, the base point component of $F$, and the long exact homotopy sequence of the fibration
\[
\xymatrix{
\textrm{Fib} \ \psi  \ar[r] & \textrm{Fib} ( f \langle 1 \rangle)\ar[r]^(.65)\psi & F_c
}
\]
shows that $\textrm{Fib} \ \psi $ is a homotopically discrete space. Then,  $\textrm{Fib} ( f \langle 1 \rangle)$ is a covering of $F_c$, and therefore it is a loop space since infinite loop structures are
preserved when considering connected components and covers (see Theorem~\ref{thm:localizeloop}). Moreover, if $f$ is a principal fibration, so is $f \langle 1 \rangle$,  since taking universal covers is a particular case of cellularization (see
Example~\ref{example:covers}) and,  the cellularization of a principal fibration is again a principal fibration  by  \cite[Theorem~2.1]{MR2581908}.
\end{proof}

\begin{corollary}
\label{cor:epflcover}
Let $Z$ be a pointed connected space. Then, $\epfl {Z \langle 1 \rangle} \leq \epfl Z.$

\end{corollary}

\begin{proof}
Suppose that $Z$ is an extension by principal fibrations of length $n$ and apply, to the associated tower of principal fibrations,
the universal cover functor. Then, the result follows by Lemma \ref{lem:fiber}.
\end{proof}

We refine this elementary observation to obtain a version where the homotopy fibers in the tower are simply connected,
a key technical fact for what follows.
\begin{lemma}
\label{lem:fibercover}
Let $f\colon Z \rightarrow Y$ be a principal fibration of simply connected spaces whose homotopy fiber $F$ is an infinite loop space.
There exists then a factorization $f\colon Z \rightarrow \overline{Y} \rightarrow Y$ such that the homotopy fiber 
$\textrm{Fib}(Z \rightarrow \overline{Y})$ is $F\langle 1 \rangle$, the universal cover of $F$.
\end{lemma}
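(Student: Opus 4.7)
The plan is to construct $\overline{Y}$ by truncating the classifying space of the principal fibration past its bottom homotopy group. Write $\pi = \pi_1 F$. Since $F$ is a connected infinite loop space, $BF$ is simply connected with $\pi_2(BF) \cong \pi$, and $f$ is classified by some $\phi \colon Y \to BF$. Let $p \colon BF \to K(\pi,2)$ be the second Postnikov section of $BF$; its homotopy fiber is the $2$-connected cover, and looping once, $\Omega p \colon F \to K(\pi,1)$ is the first Postnikov section of $F$, whose homotopy fiber is the universal cover $F\langle 1 \rangle$.

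First, I would set $\overline{Y} := \hofib(p \circ \phi \colon Y \to K(\pi,2))$, giving a principal fibration $K(\pi,1) \to \overline{Y} \to Y$. Since $Z \simeq \hofib(\phi)$, the composition $Z \to Y \to BF$ is canonically null-homotopic, and therefore so is $Z \to Y \to K(\pi,2)$. This null-homotopy provides a lift $\ell \colon Z \to \overline{Y}$ with $(\overline{Y} \to Y) \circ \ell \simeq f$, yielding the required factorization $Z \to \overline{Y} \to Y$.

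To identify the homotopy fiber of $\ell$, I would observe that $\ell$ fits into a map of fibration sequences over the common base $Y$ whose outer columns are $F \to Z \to Y$ and $K(\pi,1) \to \overline{Y} \to Y$, related by the identity on $Y$. By the naturality of the path-loop construction applied to $p$, the induced map on fibers is precisely $\Omega p \colon F \to K(\pi,1)$. A standard comparison of fibers (the top row of Mather's cube, or the long exact sequence of homotopy groups combined with the five lemma) then gives $\hofib(\ell) \simeq \hofib(\Omega p) = F\langle 1 \rangle$, as desired.

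The main obstacle I anticipate is precisely this last identification: one must verify that the restriction of $\ell$ to fibers really is $\Omega p$ and not some other representative. This is however forced by the construction, since $\ell$ arises from the canonical null-homotopy of $Z \to Y \to BF \to K(\pi,2)$, so its restriction to fibers is dictated by the functoriality of the path-loop fibration applied to $p \colon BF \to K(\pi,2)$. Everything else is straightforward manipulation of principal fibrations and Postnikov sections.
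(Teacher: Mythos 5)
Your construction is exactly the one in the paper: the authors also take the second Postnikov section $BF \to K(\pi_2 BF, 2)$, define $\overline{Y}$ as the homotopy fiber of the composite $Y \to BF \to K(\pi_2 BF, 2)$, and identify the fiber of $Z \to \overline{Y}$ with $F\langle 1\rangle$. Your write-up just makes explicit the lift $\ell$ and the comparison of fibration sequences that the paper leaves as ``by construction,'' so the proposal is correct and follows the same route.
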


\begin{proof}
Let $\theta\colon Y\to BF$ denote the classifying map of the principal fibration $f$. Since $F$ is connected,
$BF$ is simply connected and the second stage of the Postnikov tower becomes
\[
\xymatrix{
B(F \langle 1 \rangle) \simeq (BF) \langle 2 \rangle  \ar[r] & BF \ar[r]^-{\kappa_2} & K(\pi_2 BF, 2).
}
\]
Let $\overline{Y}$ be the homotopy fiber of ${\kappa_2}\circ \theta\colon Y\to  K(\pi_2 BF, 2)$. Since $ {\kappa_2} \circ \theta \circ f \simeq\ast$,
then $f$ factors through $\overline{f}\colon Z\to \overline{Y}$. Loop now the existing fibration sequences to obtain the following commutative diagram of
horizontal and vertical fibration sequences:
\[
\xymatrix{
  & \Omega Y \ar@{=}[r]\ar[d] & \Omega Y \ar[d]\\
F \langle 1 \rangle\ar[r]\ar[d] & F \ar[r]\ar[d] & K(\pi_2 BF, 1)\ar[d]\\
\textrm{Fib} \ \overline{f}  \ar[r] & Z \ar[r]^{\overline{f}} & \overline{Y}
}
\]
Since the lower right square is a homotopy pull-back (principal fibrations with the same vertical fiber), the map between the 
horizontal fibers $F \langle 1 \rangle \to \textrm{Fib} \ \overline{f} $ is a homotopy equivalence.
\end{proof}

The following result is the final step that will allow us to understand the effect of the functor $L$ on a space $Z$ with finite epfl.

\begin{proposition}
\label{prop:cover}
Let $Z$ be a connected space with $\epfl {Z} \leq n $.
Then,  $Z \langle 1 \rangle$ is also an extension by (not necessarily principal) fibrations of length $n$, where the fibers
are  simply connected infinite loop spaces.
\end{proposition}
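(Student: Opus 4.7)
The plan is to combine Lemma~\ref{lem:fiber} (universal covers preserve towers of principal fibrations) with Lemma~\ref{lem:fibercover} (one can replace the fiber of a principal fibration by its universal cover at the cost of an extra $K(A,1)$-stage) to produce, from a tower witnessing $\epfl{Z}\leq n$, a tower of the same length for $Z\langle 1\rangle$ whose fibers are simply connected infinite loop spaces. Throughout I abbreviate $Z_k\langle 1\rangle$ by $\tilde Z_k$.

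First I would start with a tower of principal fibrations $Z=Z_n\to Z_{n-1}\to\cdots\to Z_1\to\star$ witnessing $\epfl{Z}\leq n$, with connected infinite loop space fibers $F_k$ (passing to the basepoint component if necessary). By Lemma~\ref{lem:fiber} applied level by level, the tower of universal covers $\tilde Z=\tilde Z_n\to\tilde Z_{n-1}\to\cdots\to\tilde Z_1\to\star$ is again a tower of principal fibrations, whose fibers $\tilde F_k$ are connected infinite loop spaces but in general not simply connected.

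Next I would apply Lemma~\ref{lem:fibercover} to each stage $\tilde Z_k\to\tilde Z_{k-1}$ to obtain an intermediate space $\overline{\tilde Z_{k-1}}$ together with a factorization $\tilde Z_k\to\overline{\tilde Z_{k-1}}\to\tilde Z_{k-1}$ whose first map is a fibration with fiber the simply connected infinite loop space $\tilde F_k\langle 1\rangle$. The key step is then to assemble these stage-wise modifications into a single tower
\[
\tilde Z\longrightarrow\overline{\tilde Z_{n-1}}\longrightarrow\overline{\tilde Z_{n-2}}\longrightarrow\cdots\longrightarrow\overline{\tilde Z_1}\longrightarrow\star
\]
of length $n$. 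The maps $\overline{\tilde Z_k}\to\overline{\tilde Z_{k-1}}$ exist because the principality of the original tower renders the composite $\overline{\tilde Z_k}\to\tilde Z_{k-1}\to K(\pi_1\tilde F_{k-1},2)$ nullhomotopic: the pullback of the principal fibration $\tilde Z_{k-1}\to\tilde Z_{k-2}$ along $\overline{\tilde Z_k}\to\tilde Z_{k-1}$ carries the section induced by $\overline{\tilde Z_k}\to\tilde Z_k\to\tilde Z_{k-1}$, so its classifying map to $BF_{k-1}$ is null. A long exact sequence argument starting from the simply connected $\tilde Z$ and its simply connected top fiber then shows inductively that every $\overline{\tilde Z_k}$ is itself simply connected.

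The hard part will be verifying that each fiber in this assembled tower is a simply connected infinite loop space, not just the top stage (for which this is immediate from Lemma~\ref{lem:fibercover}). A three-by-three analysis identifies the fiber of $\overline{\tilde Z_k}\to\overline{\tilde Z_{k-1}}$ with a $K(\pi_1\tilde F_{k+1},1)$-fibration over $\tilde F_k\langle 1\rangle$, and simple connectedness of this extension is not automatic from the pointwise application of Lemma~\ref{lem:fibercover}. Closing this step will require using the principality of the original tower more globally---either to force the classifying map $\tilde F_k\langle 1\rangle\to K(\pi_1\tilde F_{k+1},2)$ to be appropriately trivial on $\pi_2$, or by replacing the pointwise construction with a refined one that simultaneously handles several stages of the Moore--Postnikov factorization---after which the infinite loop space structure of each fiber follows by recognizing it as a connective cover of an infinite loop space built from $\tilde F_k$ together with $\pi_1\tilde F_{k+1}$.
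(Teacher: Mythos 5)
Your starting point is the same as the paper's: pass to universal covers using Lemma~\ref{lem:fiber}, then invoke Lemma~\ref{lem:fibercover} to replace fibers by their universal covers. But the proof is not complete, and the gap is precisely the one you flag yourself at the end: after modifying \emph{every} stage simultaneously, you cannot identify the fibers of the assembled tower $\tilde Z\to\overline{\tilde Z_{n-1}}\to\cdots\to\overline{\tilde Z_1}$ as simply connected infinite loop spaces. Your three-by-three analysis correctly shows that each such fiber is an extension involving both the correction coming from the stage above and the (not yet simply connected) fiber of the original stage, and nothing in the pointwise application of Lemma~\ref{lem:fibercover} controls how these two interact. The proposed remedies (``use principality more globally,'' ``a refined construction handling several Postnikov stages at once'') are left entirely open, so the key step of the proposition is not established.

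The paper closes exactly this gap by organizing the argument as a downward induction on the length of the tower rather than an all-at-once assembly. Only the top stage is modified: Lemma~\ref{lem:fibercover} gives $Y_n\to\overline{Y}_{n-1}\to Y_{n-1}$ with simply connected top fiber, and then one composes $\overline{Y}_{n-1}\to Y_{n-1}\to Y_{n-2}$ down to the \emph{unmodified} next stage. The fiber $H$ of this composite sits in a fibration sequence $H\to G_{n-1}\to K(A_n,2)$ with $A_n=\pi_2 BG_n$; once $G_{n-1}$ is known to be simply connected, the map $G_{n-1}\to K(A_n,2)$ factors through the second Postnikov section of $G_{n-1}$, which is a map of infinite loop spaces, so $H$ is again a \emph{connected} infinite loop space. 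This exhibits $\overline{Y}_{n-1}$ as an extension by fibrations of length $n-1$ with connected infinite loop space fibers, and the induction hypothesis upgrades that tower to one with simply connected fibers. The moral is that one never needs a map between two modified stages: each correction is absorbed into the fibration immediately below it, at the cost of principality --- which is exactly why the proposition only asserts a tower of not necessarily principal fibrations. If you want to salvage your version, you should abandon the maps $\overline{\tilde Z_k}\to\overline{\tilde Z_{k-1}}$ and adopt this merge-and-recurse scheme instead.
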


\begin{proof}
Since $\epfl {Z} \leq n$ there exists a tower of principal fibrations
\[
\xymatrix{
Z \simeq Z_n  \ar[r] & Z_{n-1} \ar[r] & \dots \ar[r] & Z_1 \ar[r] & \ast
}
\]
such that the homotopy fibers $\textrm{Fib}(Z_{k} \rightarrow Z_{k-1})$ are infinite loop spaces. Since $Z$ is
connected the spaces $Z_k$ can be chosen to be connected as well (notice that fibrations are, in particular, 
surjective maps). Lemma~\ref{lem:fiber} implies that the
tower of universal covers is made of principal fibrations that have also infinite loop spaces as homotopy fibers. 
We are thus left with a tower of simply connected spaces
\[
\xymatrix{
Z\langle 1 \rangle  \simeq Z_n \langle 1 \rangle  \ar[r] & Z_{n-1} \langle 1 \rangle  \ar[r] & \dots \ar[r] & Z_1 \langle 1 \rangle \ar[r] & \ast
}
\]
The fibers $F_k$ however are not simply connected in general, but only connected.
We modify thus the spaces $Z_k \langle 1 \rangle$ for $k < n$ to find a different and more convenient tower for $Z\langle 1 \rangle$. 
Since $F_1 =  Z_1 \langle 1 \rangle$ is simply connected, we can assume by induction
that $Z_{n-1} \langle 1 \rangle$ is an extension by fibrations  of length $n-1$, with simply connected infinite loop spaces as fibers.

We use now the factorization $Z_n \langle 1 \rangle \rightarrow \overline{{Z}_{n-1} \langle 1 \rangle }\rightarrow Z_{n-1} \langle 1 \rangle$ of Lemma~\ref{lem:fibercover}.
Since the homotopy fiber of the first map is a simply connected infinite loop space, to conclude the proof
we must show that  the space in the middle is an extension by fibrations of length  $n-1$
with simply connected infinite loop spaces as fibers. Consider now the homotopy fiber $H$ of the composite
map $\overline{{Z}_{n-1} \langle 1 \rangle }  \rightarrow Z_{n-1} \langle 1 \rangle \rightarrow Z_{n-2} \langle 1 \rangle$.
The homotopy fiber $H$ fits by construction into a fibration sequence
\[
H \rightarrow F_{n-1} \rightarrow K(\pi_2 BF_n, 2).
\]
However, since $F_{n-1}$ is simply connected by induction hypothesis, the map $F_{n-1} \rightarrow K(\pi_2 BF_n, 2)$
factors through the second Postnikov section of $F_{n-1}$. This is a map of infinite loop spaces, see Theorem~\ref{thm:localizeloop},
which implies that $H$ is an infinite loop space. Therefore, the tower $ \overline{{Z}_{n-1} \langle 1 \rangle }  \rightarrow   Z_{n-2} \langle 1 \rangle \rightarrow \dots \rightarrow  Z_{1} \langle 1 \rangle$
exhibits $ \overline{{Z}_{n-1} \langle 1 \rangle }   $ as an extension by fibrations of length $n-1$ with \emph{connected} infinite loop spaces as fibers.
The induction hypothesis allows us to conclude that it is also an extension by principal fibrations of length $n-1$ with \emph{simply connected}
infinite loop spaces as fibers.\end{proof}

We finally describe the effect of $L$ (see Definition \ref{def:Neisendorfer}) on loop spaces of finite extension by principal fibrations:

\begin{theorem}
\label{thm:Lhomotopynilpotent}
Let $\Omega Z$ be a connected space with finite fundamental group. If $\epfl Z$ is finite, then  $L( \Omega Z)$ is contractible.
\end{theorem}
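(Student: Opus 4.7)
The plan is to reduce to the universal cover via Lemma~\ref{lem:cover}, build a tower for $(\Omega Z)\langle 1 \rangle$ with simply connected infinite loop space fibers using Proposition~\ref{prop:cover}, and then induct using fiberwise localization together with the fact that $L$ kills any simply connected infinite loop space.

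Since $\Omega Z$ is connected and $\pi_1(\Omega Z) = \pi_2 Z$ is a $p$-torsion group, Lemma~\ref{lem:cover} reduces the problem to showing that $L((\Omega Z)\langle 1 \rangle) \simeq \star$. Next, I would observe that looping a principal fibration with infinite loop space fiber produces another such one: if $E \to B$ is classified by a map $B \to BF$, then $\Omega E \to \Omega B$ is classified by $\Omega B \to F = B(\Omega F)$. Hence, looping the principal tower witnessing $\epfl{Z} \leq n$ yields a principal tower of length $n$ for $\Omega Z$ with infinite loop space fibers, so $\epfl{\Omega Z} \leq n$. Applying Proposition~\ref{prop:cover} to $\Omega Z$ then produces a tower of fibrations
\[
(\Omega Z)\langle 1 \rangle = W_n \to W_{n-1} \to \cdots \to W_1 \to \star
\]
of length $n$ in which each intermediate space $W_k$ is simply connected and the fiber $G_k$ of $W_k \to W_{k-1}$ is a simply connected infinite loop space.

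I would then prove by induction on $k$ that $L(W_k) \simeq \star$. The base case $k = 0$ is immediate (with $W_0 = \star$). For the inductive step, the fiber $G_k$ is a connected infinite loop space with trivially $p$-torsion fundamental group, so Remark~\ref{rem:Neisendorfer} (citing McGibbon) yields $L(G_k) \simeq \star$. Applying fiberwise $L$-localization to $W_k \to W_{k-1}$, the new fiber becomes $L(G_k) \simeq \star$, so the fiberwise localization map $W_k \to \widetilde{W}_k \simeq W_{k-1}$ is an $L$-equivalence. Combined with $L(W_{k-1}) \simeq \star$ from the induction hypothesis, this gives $L(W_k) \simeq \star$. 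Taking $k = n$ and invoking the initial reduction yields $L(\Omega Z) \simeq \star$.

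The main obstacle is arranging for the fibers in the tower to be \emph{simply connected} infinite loop spaces so that Remark~\ref{rem:Neisendorfer} applies to them; this is precisely the content of Proposition~\ref{prop:cover}, which is why the careful modification of Lemma~\ref{lem:fibercover} was set up in its proof.
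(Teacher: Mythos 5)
Your proof is correct and follows essentially the same route as the paper's: reduce to the universal cover via Lemma~\ref{lem:cover}, apply Proposition~\ref{prop:cover} to obtain a tower whose fibers are simply connected infinite loop spaces, and kill those fibers using McGibbon's result together with fiberwise localization. Your explicit justification that $\epfl{\Omega Z}\leq \epfl{Z}$ by looping the classifying maps is a detail the paper leaves implicit, but it is not a different argument.
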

\begin{proof}
By Lemma~\ref{lem:cover} it is enough to prove
that $L\big( (\Omega Z)\langle 1 \rangle \big)$ is contractible.  Let $\epfl Z \leq n$. Then, since looping a tower of fibrations with
infinite loop spaces fibers yields another such tower, $\epfl {\Omega Z}\leq n$ and
by Proposition~\ref{prop:cover}, $(\Omega Z)\langle 1 \rangle $ is also an extension of fibrations of length $n$,
\begin{equation*}
(\Omega Z)\langle 1 \rangle \simeq Y_n \rightarrow Y_{n-1}  \rightarrow \dots \rightarrow Y_{1} \rightarrow \ast
\end{equation*}
where the fibers are simply connected infinite loop spaces~$F_k$.
By  Remark  \ref{rem:Neisendorfer},
$L F_k$ is contractible for any $n \geq k \geq 1$, and according to Theorem~\ref{prop:fiberwise} $L(Y_k)\simeq L(Y_{k-1})$.
Therefore, an inductive argument shows that $L\left( (\Omega Z)\langle 1 \rangle \right) \simeq \ast$.
\end{proof}

\begin{remark}
\label{rem:tori}
The finiteness assumption on the fundamental group in Theorem~\ref{thm:Lhomotopynilpotent} is not necessary, but
the contractibility of  $L( \Omega Z)$ does not hold as soon as there is a copy of the integers in $\pi_1 (\Omega Z)$.
Suppose for example that $\pi_1 (\Omega Z)$ is a finitely generated abelian group which is infinite. There exists then an epimorphism to $\mathbb Z$
which can be used to construct a map $\Omega Z \rightarrow K\big(\pi_1 (\Omega Z), 1\big) \rightarrow K(\mathbb Z, 1) = S^1$. This map has a section hence $\Omega Z$ dominates $S^1$. Therefore $L(\Omega Z)$ dominates $LS^1 \simeq (S^1)^\wedge_p$, and $L(\Omega Z)$ is not contractible.
\end{remark}

\section{Homotopy nilpotency of $p$-complete loop spaces}
\label{sec:modp}

In this section we characterize homotopy commutative loop spaces in the sense of Biedermann--Dwyer, and more generally homotopy nilpotent groups with finiteness conditions.

We now recall some definitions. A loop space $(X, BX)$ is said to be a $p$-compact group  \cite[Definition 2.3]{DW1} if $BX$ is $p$-complete and $X$ is $\F_p$-finite, i.e., \ $H^*(X;{\F}_p)$ is a finite dimensional ${\F}_p$-vector space. A $p$-compact torus of rank $r$ is a loop space $(T, BT)$ such that $BT$ is an Eilenberg-MacLane space of type $K\big((\Z^\wedge_p)^r, 2\big)$ \cite[Definition 6.3]{DW1}.
The following result characterizes homotopy nilpotent $p$-compact groups in the Biedermann--Dwyer
sense.

\begin{theorem}
\label{thm:C}
Let $(X, BX)$ be a connected $p$-compact group. Then, $\nil{X}$ is finite if and only if $(X,BX)$ is a $p$-compact torus.
\end{theorem}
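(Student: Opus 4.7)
The reverse implication is immediate from Proposition~\ref{prop:equalities}: a $p$-complete torus is an Eilenberg--MacLane space $K((\mathbf Z^\wedge_p)^m,1)$, hence an infinite loop space, and so has $\nil{X}=1$.

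For the forward direction, suppose $\nil{X}$ is finite. By Theorem~\ref{thm:inequalities} we have $\epfl{BX}\leq\nil{X}<\infty$, and since looping a principal fibration with infinite loop space fibre produces another such principal fibration, the tower witnessing $\epfl{BX}<\infty$ loops to one for $X$, so $\epfl{X}<\infty$ as well. I would then apply Proposition~\ref{prop:cover} to $X$: its universal cover $X\langle 1\rangle$ admits a finite tower of fibrations whose fibres are \emph{simply connected} infinite loop spaces. By McGibbon's theorem each such fibre is killed by $L$, and fibrewise localization, exactly as in the proof of Theorem~\ref{thm:Lhomotopynilpotent}, yields $L(X\langle 1\rangle)\simeq\star$.

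The $p$-compact hypothesis enters decisively at the next step. By the standard theory of $p$-compact groups (Dwyer--Wilkerson), the universal cover $X\langle 1\rangle$ of a connected $p$-compact group is itself a simply connected $p$-compact group, hence $\mathbf F_p$-finite, $p$-complete and $1$-connected. Miller's theorem, in its Lannes form for such spaces, then gives $\map_*(B\mathbf Z/p,X\langle 1\rangle)\simeq\star$, so $X\langle 1\rangle$ is $B\mathbf Z/p$-null; since it is already $p$-complete, $L$ acts as the identity on it, that is $L(X\langle 1\rangle)\simeq X\langle 1\rangle$. Combined with the previous paragraph this forces $X\langle 1\rangle\simeq\star$. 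Therefore $X\simeq K(\pi_1 X,1)$ with $\pi_1 X$ a finitely generated $\mathbf Z^\wedge_p$-module; since the mod $p$ cohomology of $K(\mathbf Z/p^k,1)$ is infinite-dimensional while that of $X$ is finite, the group $\pi_1 X$ must be torsion-free, hence isomorphic to $(\mathbf Z^\wedge_p)^m$, and $X$ is a $p$-complete torus.

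The step I expect to be the most delicate is the Dwyer--Wilkerson type input that $X\langle 1\rangle$ is again a $p$-compact group: this is what allows one to invoke Miller--Lannes and to conclude that the $B\mathbf Z/p$-null, $p$-complete space $X\langle 1\rangle$ is fixed by $L$ (compare Remark~\ref{rem:Neisendorfer} concerning the subtle relationship between $L$ and Neisendorfer's functor). Once this ingredient is in hand, the proof is a clean marriage of the machinery of Section~\ref{sec:infiniteloop} with standard facts about Eilenberg--MacLane spaces.
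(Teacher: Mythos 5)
Your proof is correct and follows essentially the same route as the paper's: reduce to the universal cover, kill $L(X\langle 1\rangle)$ using the finite tower with simply connected infinite loop space fibres plus McGibbon's theorem, use Miller's theorem to see that $L$ fixes the $\F_p$-finite, $p$-complete space $X\langle 1\rangle$ so that it must be contractible, and finish with the Dwyer--Wilkerson description of $\pi_1$ of a $p$-compact group. The only cosmetic difference is that the paper transfers homotopy nilpotency to $X\langle 1\rangle$ via the cellularization functor $\operatorname{CW}_{\eS^2}$ and then quotes Theorem~\ref{thm:Lhomotopynilpotent} wholesale, whereas you push the $\operatorname{epfl}$ tower through Proposition~\ref{prop:cover} directly; both are valid.
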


\begin{proof}
By applying the cellularization functor $\CW_{S^2}$ to $X$,
we obtain the universal cover of $X$,
$\CW_{S^2} (X) = X \langle 1 \rangle  \simeq \textrm{Fib}(X \rightarrow B\pi_1(X))$.
Cellularization is a continuous functor that preserves products up to homotopy \cite[Theorem~2.E.10]{Dror}, therefore if $\nil{X}$ is finite, then
so is $\nil{X \langle 1 \rangle}$.
Now, since $X \langle 1 \rangle $ is a connected loop space
with finite fundamental group,  Theorem~\ref{thm:Lhomotopynilpotent} tells us that $L (X \langle 1 \rangle )$ must be contractible.

However, by Miller's proof of the Sullivan conjecture \cite[Theorem~A]{Miller} the pointed mapping space
$\hbox{\rm map}_*(B\mathbb Z/p, X\langle 1 \rangle)$ is contractible. In terms of localization functors, this means that
$X\langle 1 \rangle$ is $B\mathbb Z/p$-local, and so $\P_{B \mathbb Z/p}\big( X\langle 1 \rangle \big) \simeq X\langle 1 \rangle$,
as noticed already in the introduction of \cite{MR1321002}. Therefore, since $X \langle 1 \rangle$ is $p$-complete,
it is in fact $(f \vee g)$-local,
which means that $L \big( X \langle 1 \rangle\big) \simeq X \langle 1 \rangle$. Thus,
$X \langle 1 \rangle$ must be contractible, so that $X$ is homotopy equivalent to $B\pi_1(X)$.

Now, by
\cite[Remark~2.2]{DW1}, the fundamental group of a $p$-compact group is
a finite direct sum of copies of cyclic groups $\Z/p^r$ and copies of $p$-adic integers $\mathbb Z^\wedge_p$.
Since $X$ is $\mathbb F_p$-finite, and $B\Z/p^r$ is not, this implies that $\pi_1(X)$ contains no factor of $\Z/p^r$-type.
We conclude that $(X,BX)$ is a $p$-compact torus.
\end{proof}

For any simply connected Lie group $G$, the $p$-completion of $BG$ gives rise to a $p$-compact group.
The following result is thus straightforward.

\begin{corollary}
\label{cor:infinite}
Let $G$ be a non-trivial simply connected compact Lie group. Then $\nil{G^\wedge_p}$ is infinite.
\hfill{$\square$}
\end{corollary}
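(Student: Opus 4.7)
The plan is to derive the corollary as a near-immediate consequence of Theorem~\ref{thm:C} by contradiction. Assume that $\nil{G^\wedge_p}$ is finite, so that the loop space $G^\wedge_p$ is a homotopy nilpotent group. Since $G$ is a simply connected compact Lie group, its $p$-completion $G^\wedge_p$ fits into the framework of a connected $p$-compact group $(G^\wedge_p, BG^\wedge_p)$, so Theorem~\ref{thm:C} applies and forces $G^\wedge_p$ to be a $p$-complete torus.

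The next step is to observe that, since $G$ is simply connected and $G$ is a nilpotent space, $p$-completion preserves simple connectivity, whence $\pi_1(G^\wedge_p) = 0$. However, a $p$-complete torus is a product of copies of $K(\mathbb{Z}^\wedge_p, 1)$ (as already recorded in Example~\ref{exm:tori}), so its fundamental group is a finite direct sum of copies of $\mathbb{Z}^\wedge_p$. The only simply connected $p$-complete torus is therefore the point, which would imply $G^\wedge_p \simeq \star$.

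The remaining point to rule out is that $G$ itself be trivial. The statement implicitly excludes this case (for the trivial group $\nil{\star} = 0$), but for any non-trivial simply connected compact Lie group $G$, the mod~$p$ cohomology $H^*(G;\mathbb{F}_p)$ is non-trivial, so $G^\wedge_p$ cannot be contractible. This contradicts the conclusion of the previous paragraph, and hence $\nil{G^\wedge_p}$ cannot be finite.

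I expect no serious obstacle here since the bulk of the work is already carried out in Theorem~\ref{thm:C}; the only subtlety is confirming that $G^\wedge_p$ genuinely satisfies the hypotheses of that theorem (connected $p$-compact group structure, together with the identification of its fundamental group via \cite{DW}) and that the simple connectivity of $G$ transfers to $G^\wedge_p$. Both facts are standard in the theory of $p$-compact groups, so the proof reduces to assembling them in a short argument.
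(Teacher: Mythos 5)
Your proof is correct and follows exactly the route the paper intends: the corollary is stated with an immediate \qed, as it is a direct consequence of Theorem~\ref{thm:C} together with the observation that a non-trivial simply connected compact Lie group cannot $p$-complete to a $p$-complete torus. Your extra care about the trivial group and about simple connectivity passing to the $p$-completion is sound and fills in exactly the details the paper leaves tacit.
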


This implies that the nilpotency in the sense of Berstein--Ganea, and the nilpotency in the sense of
Biedermann--Dwyer do not coincide in general.

\begin{example}
\label{ex:kishimoto}
McGibbon's $p$-local examples of classically homotopy commutative compact Lie groups can be translated into the $p$-complete
setting, \cite[Theorem 2]{McGibbon}. Simply connected simple Lie groups at large enough primes, $Sp(2)$ at the prime~$3$,
and the exceptional group $G_2$ at the prime~$5$, have Berstein--Ganea
nilpotency~$1$, whereas according to Corollary~\ref{cor:infinite} they all have infinite Biedermann--Dwyer nilpotency.

In \cite{KK} Kaji and Kishimoto  study examples of $p$-compact groups that are nilpotent in the sense of Berstein--Ganea
but they are not  $p$-compact tori, so by Theorem~\ref{thm:C} they are not homotopy nilpotent in the sense of Biedermann--Dwyer.
There exist in fact infinitely many loop spaces $X$  for which $\nilbg X  < \infty$, but $\nil X = \infty$. This is the case
for $E_8$ at the prime $41$ with $\nilbg{(E_8)^\wedge_{41}} = 3$, \cite[Theorem~1.6]{KK}.
\end{example}

We move now from $p$-compact groups to $p$-Noetherian groups. Let us recall from \cite{CCS_2} that
a $p$-Noetherian group is a loop space $(X, BX)$ where $BX$ is $p$-complete and $H^\ast (X; \F_p)$
is a finitely generated (Noetherian) $\F_p$-algebra.

\begin{theorem}
\label{thm:B}
Let $(X, BX)$ be a connected $p$-Noetherian group.  If $X$ is a homotopy nilpotent group, then $BX$ fits in a fibration sequence
\[
K ( Q, 2) \times K ( \Z^\wedge_p, 3)^r \rightarrow BX \rightarrow  \left((BS^1)^\wedge_p\right)^s
\]
where $Q$ is a finite abelian $p$-group, and $r,s\geq 0$.
\end{theorem}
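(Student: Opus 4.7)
The plan is to extend the strategy of Theorem~\ref{thm:C} one Postnikov stage higher, since a $p$-Noetherian group may carry non-trivial $\pi_2$, which gives rise to the $((BS^1)^\wedge_p)^s$ factors appearing in the base. The central goal is to show that the $2$-connected cover $X\langle 2 \rangle$ is contractible; from there the claimed Postnikov description of $BX$ follows by identifying $\pi_1(X)$ and $\pi_2(X)$.

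First I would transfer the homotopy nilpotency from $X$ to $X\langle 2 \rangle$. Since $X \langle 2 \rangle \simeq \operatorname{CW}_{\eS^3}(X)$ and cellularization is a continuous functor preserving products up to homotopy \cite[Theorem~2.E.10]{Dror}, $X\langle 2 \rangle$ is again homotopy nilpotent. Theorem~\ref{thm:inequalities} then gives $\epfl{BX\langle 3 \rangle} \leq \nil{X\langle 2 \rangle} < \infty$, and Theorem~\ref{thm:Lhomotopynilpotent} applied to $Z = BX\langle 3\rangle$ yields $L(X\langle 2 \rangle) \simeq \star$ (the $p$-torsion condition on $\pi_1(X\langle 2 \rangle)$ is trivially satisfied since the group is trivial).

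To promote $L(X\langle 2 \rangle) \simeq \star$ to $X\langle 2 \rangle \simeq \star$ I need to verify that $L$ fixes $X\langle 2 \rangle$, i.e.\ that it is both $p$-complete and $B\Z/p$-null. This is where the Noetherian hypothesis is indispensable: the results of \cite{CCS_2} guarantee that the relevant covers of a $p$-Noetherian group remain Noetherian $H$-spaces, and they provide the appropriate extension of Miller's theorem, yielding $\operatorname{map}_{\ast}(B\Z/p, X\langle 2 \rangle) \simeq \star$ together with $p$-completeness. Hence $X\langle 2 \rangle \simeq \star$, so $X$ has homotopy concentrated in degrees $1$ and $2$ and $BX$ is a $2$-stage Postnikov piece.

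It then remains to identify the homotopy groups. The structure theorem for fundamental groups of $p$-Noetherian groups from \cite{CCS_2} writes $\pi_1(X) = P \oplus (\Z^\wedge_p)^s$ for a finite abelian $p$-group $P$. For $\pi_2(X)$, observe that the universal cover $X\langle 1 \rangle \simeq K(\pi_2(X), 2)$ must retain Noetherian mod $p$ cohomology; since the cohomology of $K(\Z/p^r, 2)$ with $\F_p$ coefficients is of infinite type, $\pi_2(X)$ must be torsion-free, forcing $\pi_2(X) \cong (\Z^\wedge_p)^r$. After $p$-completion, the base of the $2$-stage Postnikov tower of $BX$ takes the form $K(P,2) \times ((BS^1)^\wedge_p)^s$, and its fiber is $K(\Z^\wedge_p, 3)^r$. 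The main obstacle is the Miller-type step: establishing that the $2$-connected cover of a $p$-Noetherian group is simultaneously $B\Z/p$-null and $p$-complete is not formal, and relies on the structural analysis carried out in \cite{CCS_2}.
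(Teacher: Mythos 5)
Your overall architecture (kill the $2$-connected cover using $L$, then read off $\pi_1$ and $\pi_2$) is reasonable, and the first half is sound: the transfer of homotopy nilpotency to $X\langle 2\rangle$ via $\operatorname{CW}_{\eS^3}$ and the conclusion $L(X\langle 2\rangle)\simeq\star$ from Theorem~\ref{thm:inequalities} and Theorem~\ref{thm:Lhomotopynilpotent} both work. The genuine gap is exactly the step you flag and then defer: the claim that \cite{CCS_2} supplies $\operatorname{map}_{\ast}(B\Z/p, X\langle 2\rangle)\simeq\star$ together with $p$-completeness of $X\langle 2\rangle$. No such statement is available there as a quotable result, and it is not a routine ``extension of Miller's theorem'': connected covers are generically very far from $B\Z/p$-null (Neisendorfer's theorem says that $P_{B\Z/p}$ applied to a highly connected cover of a $p$-completed finite complex recovers the whole complex), and $X\langle 2\rangle$ need not be $\F_p$-finite, so Miller does not apply directly. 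The only way to establish the nullity is to invoke the actual structural result of \cite{CCS_2}, namely the fibration $K(P,2)^\wedge_p\rightarrow BX\rightarrow BY$ with $Y$ a $p$-compact group, identify $X\langle 2\rangle$ with a connected cover of $Y$, and then still argue that this cover is $\F_p$-finite (which requires knowing $\pi_2$ of a simply connected $p$-compact group vanishes). Similarly, your identification of $\pi_2(X)$ rests on the unproved assertion that the universal cover of a $p$-Noetherian group has Noetherian mod $p$ cohomology.

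Once you are forced to use that fibration, the paper's route is strictly more economical and avoids the nullity question altogether: $Y\simeq (P_{B\Z/p}X)^\wedge_p$ is obtained from $X$ by continuous, product-preserving functors, hence is homotopy nilpotent whenever $X$ is; Theorem~\ref{thm:C} then forces $Y$ to be a $p$-complete torus, and the stated fibration sequence for $BX$ drops out after observing that $p$-completion turns the Pr\"ufer summands of $P$ in degree $2$ into $K(\Z^\wedge_p,3)$ factors. I would recommend either restructuring your argument along these lines, or, if you want to keep the $2$-connected cover strategy, supplying a genuine proof that $X\langle 2\rangle$ is $B\Z/p$-null and $H\F_p$-local for a homotopy nilpotent $p$-Noetherian group --- which, as it stands, is the whole content of the theorem.
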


\begin{proof}
In \cite[Theorem 1.9]{CCS_2}, for a $p$-Noetherian group $X$, the authors construct a fibration
\[
K(P, 2 )^\wedge_p\rightarrow BX \rightarrow BY = (\P_{\Sigma B \Z/p} BX)^\wedge_p
\]
where $P=Q\oplus (\Z/p^\infty)^r$, for $Q$ a finite abelian $p$-group, $r\geq 0$, $\Z/p^\infty = \mathbb Z[1/p]/ \mathbb Z$
is a Pr\"ufer group, \cite[Theorem~10.13]{MR1307623}, and $(Y,BY)$ is a $p$-compact group.
Since we have a series of homotopy equivalences
\[
Y \simeq \Omega B Y  \simeq \Omega (\P_{\Sigma B \Z/p} BX)^\wedge_p \simeq \left(\Omega (\P_{\Sigma B \Z/p} BX)\right) ^\wedge_p
\]
and $\left(\Omega (\P_{\Sigma B \Z/p} BX)\right) ^\wedge_p$ is homotopy equivalent to  $(\P_{B\Z/p} \Omega B X ) ^\wedge_p$ by \cite[Theorem~3.A.1]{Dror},
we get that $Y \simeq (\P_{B\Z/p} X)^\wedge_p$. Also observe that all the homotopy equivalences are loop maps by Theorem~\ref{thm:localizeloop}.
This allows us to say that  $(Y, BY)$ is obtained in a functorial way from $(X, BX)$ using the nullification and the $p$-completion functors.
Both functors are continuous and preserve products up to homotopy.
Therefore, if $\nil{X}$ is finite, so is $\nil{Y}$.

Finally, since $(Y,BY)$ is a $p$-compact group and $\nil{Y}$ is finite, Theorem~\ref{thm:C} implies that $(Y,BY)$ must be a $p$-compact torus,
that is, $BY\simeq \left((BS^1)^\wedge_p\right)^s$ for some $s\geq 0$.
For $\Z/p^\infty$,  $p$-completion shifts dimension by one, \cite[Example~VI.6.4]{MR0365573}:
\[
K(P, 2 )^\wedge_p\simeq K(Q,2)\times K\big((\Z/{p^\infty})^r,2\big)^\wedge_p \simeq K ( Q, 2) \times K ( \Z^\wedge_p, 3)^r
\]
and the result follows.
\end{proof}

It follows from the previous theorem that $p$-Noetherian groups which are homotopy nilpotent have epfl at most $2$, where the tower of principal fibrations with infinite loop space fibers is provided by the Postnikov tower.
We prove now that this invariant coincides with the Biedermann--Dwyer nilpotency in this situation.

\begin{corollary}
\label{cor:thmB}
Let $(X, BX)$ be a connected $p$-Noetherian group and assume that $\nil{X}$ is finite. Then $\nil{X} \leq2$.
\end{corollary}

\begin{proof}
By the previous theorem, the homotopy groups $\pi_n(BX)$ vanish unless $2 \leq n \leq 3$.
The non-trivial homotopy groups of the simply connected space $BX$ live thus in the metastable range, therefore
by \cite[Example 9.8]{MR2580428}, $\nil{X} \leq 2$.
\end{proof}

We finally come back to the \emph{integral} Torus Theorem, \cite[Theorem~1.1]{Hubbuck}. The classical statement is that, if $\nilbg X = 1$,
then $X$ has the homotopy type of a torus.

\begin{theorem}
\label{thm:A}
Let $(X, BX)$ be a connected finite loop space. If $\nil{X}$ is finite, then  $X$ has the homotopy type of a torus.
\end{theorem}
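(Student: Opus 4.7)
The strategy is to apply Theorem~\ref{thm:C} one prime at a time and then reassemble the $p$-completions into an integral statement. Let $p$ be any prime. Since $X$ is a finite connected CW complex, $H^\ast(X;\F_p)$ is finite dimensional, so the pair $(X^\wedge_p, (BX)^\wedge_p)$ is a connected $p$-compact group. The functor $(-)^\wedge_p$ is continuous and preserves finite products up to homotopy on nilpotent finite-type spaces, hence it sends homotopy ${\mathcal G_n}$-algebras to homotopy ${\mathcal G_n}$-algebras (exactly as in the proof of Theorem~\ref{thm:B}). Thus $X^\wedge_p$ is itself a homotopy nilpotent group, and Theorem~\ref{thm:C} gives
\[
X^\wedge_p \;\simeq\; K(\Z^\wedge_p,1)^{r_p}
\]
for some integer $r_p\geq 0$.

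To reassemble, I first note that $X$ is a finite connected H-space, hence a nilpotent space of finite type, so every $\pi_i(X)$ is a finitely generated abelian group. Moreover $r_p$ equals the number of (odd-degree) exterior generators of $H^\ast(X;\Q)$, hence is independent of $p$; call this common value $r$. For a nilpotent space of finite type, $p$-completion on abelian homotopy groups is given by $\pi_i(X^\wedge_p)\cong \pi_i(X)\otimes_\Z\Z^\wedge_p$. The vanishing $\pi_i(X^\wedge_p)=0$ for all $i\geq 2$ and all primes $p$, together with finite generation of $\pi_i(X)$, forces $\pi_i(X)=0$ for every $i\geq 2$. The same argument applied to $\pi_1$ yields $\pi_1(X)\otimes\Z^\wedge_p\cong (\Z^\wedge_p)^r$ for every $p$, so $\pi_1(X)$ is torsion-free of rank $r$, that is $\pi_1(X)\cong \Z^r$. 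Therefore $X\simeq K(\Z^r,1)\simeq (S^1)^r$ is a torus.

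The step I expect to be the main technical point is verifying that $p$-completion preserves homotopy nilpotency; it mirrors the argument already used for nullification followed by completion in the proof of Theorem~\ref{thm:B}, the key inputs being that $(-)^\wedge_p$ is a simplicial functor and preserves finite products up to homotopy on the relevant class of spaces. Once this is in place, the constancy of $r_p$ across primes follows from rational cohomology, and everything else is a clean bookkeeping argument with the arithmetic fracture square and the finiteness of $X$.
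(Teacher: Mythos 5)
Your proposal is correct and follows essentially the same route as the paper: pass to $p$-completions, observe that $(X^\wedge_p, BX^\wedge_p)$ is a $p$-compact group and that $p$-completion (being continuous and product-preserving) keeps $X^\wedge_p$ homotopy nilpotent, apply Theorem~\ref{thm:C} at each prime, and reassemble via the arithmetic square. The paper's proof is just a terser version of this, leaving implicit the preservation of homotopy nilpotency under completion and the bookkeeping with homotopy groups that you spell out.
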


\begin{proof}
For a finite and connected loop space $(X, BX)$, it is well known that the $p$-completion $(X^\wedge_p, B X^\wedge_p )$ is a
$p$-compact group for every prime $p$ (see, for example, \cite[Introduction]{MR2135183}, \cite[p. 990]{MR2827828}).
Theorem \ref{thm:C} applies and we obtain that $X^\wedge_p$ is a $p$-compact
torus for every prime $p$. Now, Sullivan's arithmetic square, \cite[Theorem VI.8.1]{MR0365573},
\[
\xymatrix{
X \ar@{->}[r]\ar[d] & \prod_p X^\wedge_p \ar[d]\\
X_0 \ar[r] & \left( \prod_p X^\wedge_p \right)_0
}
\]
is a  homotopy pull-back square. On the right hand side we have a product of $p$-complete tori and its rationalization, whereas
on the bottom left corner the rationalization of a finite loop space is a product of odd dimensional rational spheres $K(\mathbb Q, 2k+1)$.
Any higher dimensional sphere than $S^1$ would remain in the homotopy pull-back, so $X_0$ must be a rational torus
and we conclude that $X$ has the homotopy type of a torus.
\end{proof}

Following the ideas of Rector \cite{Rector}, it is commonly accepted that compact Lie groups should be thought of
as finite loop spaces, and that the structural data of the Lie group have to be read by means of homotopy invariants.
Within this framework, connected non abelian compact Lie groups are expected to be highly non nilpotent.
The previous result shows that nilpotency in the sense of Biedermann--Dwyer is the right notion in contrast with
classical nilpotency in the sense of Berstein--Ganea. So for example, by a classical result of Porter \cite{MR0169244}
the sphere $S^3$ is nilpotent in the sense of Berstein--Ganea,
$\nilbg {S^3} =3$, but since $S^3$ is not a torus, $\nil {S^3} = \infty.$



\bibliographystyle{amsplain}

\begin{thebibliography}{10}


\bibitem{MR2035696}
A.\ Adem and R.J.\ Milgram,
\emph{Cohomology of finite groups},
Second edition. Grundlehren der Mathematischen Wissenschaften \textbf{309}.
Springer-Verlag, Berlin, 2004. viii+324 pp.


\bibitem{AguadeSmith}
J.~Aguad\'e and L.~Smith, \emph{On the mod {$p$} torus theorem of John Hubbuck}, Math. Z.
\textbf{191} (1986), 325--326.

\bibitem{MR2135183} K.K.S.\ Andersen, T.\ Bauer, J.\ Grodal, and E.P.\ Pedersen, \emph{A finite loop space not rationally equivalent to a compact Lie group}, Invent.\ Math.\ \textbf{157} (2004), 1--10.

\bibitem{ADL}
G.~Arone, W.G.\ Dwyer, and K.\ Lesh,
\emph{Loop structures in Taylor towers,}
Algebr.\ Geom.\ Topol.\ \textbf{8} (2008), 173--210

\bibitem{MR1669268}
G.~Arone and M.~Mahowald, \emph{The {G}oodwillie tower of the identity functor
  and the unstable periodic homotopy of spheres}, Invent.\ Math.\ \textbf{135}
  (1999), 743--788.

\bibitem{MR1923968}
B.\ Badzioch, \emph{Algebraic theories in homotopy theory},
Ann.\ of Math.\ \textbf{155} (2002), 895--913.

\bibitem{MR0649409}
A.J.\ Berrick, \emph{An approach to algebraic K-theory},
Research Notes in Mathematics \textbf{56}, Pitman (Advanced Publishing Program),
Boston, Mass.-London, 1982. iii+108 pp.

\bibitem{MR0126277}
I.~Berstein and T.~Ganea, \emph{Homotopical nilpotency}, Illinois J.\ Math.\
  \textbf{5} (1961), 99--130.

\bibitem{Biedermann}
G.~Biedermann, \emph{Homotopy nilpotent groups and their associated functors}, preprint available at arxiv.org/abs/1705.04963.

\bibitem{MR2580428}
G.~Biedermann and W.G.~Dwyer, \emph{Homotopy nilpotent groups}, Algebr.\ Geom.\
  Topol.\ \textbf{10} (2010), 33--61.

\bibitem{MR0365573}
A.K.\ Bousfield and D.M.\ Kan, \emph{Homotopy limits, completions and localizations},
Lecture Notes in Mathematics \textbf{304}, Springer-Verlag, Berlin-New York, 1972. v+348 pp.

 \bibitem{CCS_1}
N.~Castellana, J.~Crespo, and J.~Scherer, \emph{Deconstructing Hopf spaces}, Invent.\ Math.\ \textbf{167} (2007), 1--18.

  \bibitem{CCS_2}
\bysame, \emph{Noetherian loop spaces}, J.\ Eur.\ Math.\ Soc.\ \textbf{13} (2011), 1225--1244.

\bibitem{ChoSch}
B.~Chorny and J.~Scherer, \emph{Goodwillie calculus and Whitehead products}, Forum.\ Math.\ \textbf{27} (2015), 119--130.


\bibitem{Dror}
E.~Dror Farjoun, \emph{Cellular spaces, null spaces and homotopy
localization},  Lecture Notes in Mathematics \textbf{1622}, Springer-Verlag, Berlin, 1996.

\bibitem{MR2581908}
W.G.\ Dwyer and E.D.\ Farjoun, \emph{Localization and cellularization of principal fibrations},
in Alpine perspectives on algebraic topology, 117--124,
Contemp.\ Math., {\bf 504}, Amer.\ Math.\ Soc., Providence, RI, 2009.

\bibitem{DW1} W.\ Dwyer and C.\ Wilkerson, {\it Homotopy fixed points methods for
Lie groups and finite loop spaces}, Ann.\ of Math.\ {\bf 139} (1994), 395--442.

\bibitem{Eldred}
R.~Eldred, \emph{Goodwillie calculus via adjunction and {LS} cocategory}, Homology Homotopy Appl.\
\textbf{18} (2016), 31--58.

\bibitem{Ganea}
T.~Ganea, \emph{Lusternik-Schnirelmann category and cocategory}, Proc.\ London Math.\ Soc.\  \textbf{10} (1960), 623--639.


\bibitem{MR1162445}
T.G.\ Goodwillie, \emph{Calculus. {II}. {A}nalytic functors}, $K$-Theory
  \textbf{5} (1991/92), 295--332.

\bibitem{MR2026544}
\bysame, \emph{Calculus. {III}. {T}aylor series}, Geom.\ Topol.\ \textbf{7}
  (2003), 645--711 (electronic).

\bibitem{MR1622342}
J.\ Grodal,  \emph{The transcendence degree of the mod p cohomology of finite Postnikov systems} in ``Stable and unstable homotopy'', Fields Inst.\ Commun.\ \textbf{19} (1998), 111--130.

\bibitem{MR2827828}
\bysame, \emph{The classification of $p$-compact groups and homotopical group theory}. Proceedings of the International Congress of Mathematicians. Volume II, 973--1001, Hindustan Book Agency, New Delhi, 2010.

\bibitem{MR1017164}
M.J.\ Hopkins, \emph{Formulations of {c}ocategory and the {i}terated {s}uspension}, Ast\'erisque (Algebraic homotopy and local algebra),
\textbf{113--114} (1984), 212--226.

\bibitem{Hopkins}
\bysame, \emph{Nilpotence and finite {$H$}-spaces}, Israel J.\ Math.\
  \textbf{66} (1989), 238--246.

\bibitem{Hovey}
M.~Hovey, \emph{Lusternik-Schnirelmann cocategory}, Illinois J.\ of Math.\
  \textbf{37} (1993), 224--239.

\bibitem{Hubbuck}
J.R.\ Hubbuck, \emph{On homotopy commutative {$H$}-spaces}, Topology,
\textbf{8} (1969), 119--126.

\bibitem{jardine}J.F.~Jardine, \emph{Simplicial presheaves}, J.\ Pure Appl.\ Algebra \textbf{47} (1987), 35--87.

\bibitem{joyal} A.~Joyal, \emph{Letter to A.\ Grothendieck}, available at\\ \texttt{https://webusers.imj-prg.fr/$\sim$georges.maltsiniotis/ps/lettreJoyal.pdf}

\bibitem{KK}
S.~Kaji and D.~Kishimoto, \emph{Homotopy nilpotency in $p$-regular loop spaces},
Math.\ Z.\ \textbf{264} (2010) 209--224.

\bibitem{MR0158921}
F.W.\ Lawvere, \emph{Functorial semantics of algebraic theories}, Proc.\ Nat.\
  Acad.\ Sci.\ U.S.A. \textbf{50} (1963), 869--872.


\bibitem{MR0808914} J.\
Lin, \emph{A cohomological proof of the torus theorem},
Math.\ Z.\ \textbf{190} (1985), 469--476.

\bibitem{MR2522659} J.\ Lurie, 
\emph{Higher topos theory},
Annals of Mathematics Studies \textbf{170}, Princeton University Press, Princeton, NJ, 2009. xviii+925 pp.

\bibitem{McGibbon}
C.A.\ McGibbon, \emph{Homotopy commutativity in localized groups},
Amer.\ J.\ Math.\ \textbf{106} (1984), 665--687.

\bibitem{MR1371135}
\bysame, \emph{Infinite loop spaces and {N}eisendorfer localization},
Proc.\ Amer.\ Math.\ Soc.\ \textbf{125} (1997), 309--313.

\bibitem{Miller}
H.~Miller, \emph{The {S}ullivan conjecture on maps from
classifying spaces},
  Ann.\ of Math.\ \textbf{120} (1984), 39--87.

\bibitem{MV}
A.~Murillo and A.~Viruel, \emph{Lusternik-{S}chnirelmann cocategory: A {W}hitehead dual approach}, Cohomological methods in homotopy theory,
Progr.\ Math.\ \textbf{196}, Birkh\"auser Verlag, Basel, (2001), 323--347.

\bibitem{MR1321002}
J.~Neisendorfer, \emph{Localization and connected covers of finite complexes},
in The \v{C}ech centennial (Boston, MA, 1993), 385--390,
Contemp.\ Math.\ {\bf 181}, Amer.\ Math.\ Soc., Providence, RI, 1995.

\bibitem{MR0169244}
G.J.\ Porter, \emph{Homotopical nilpotence of {$S^{3}$}}, Proc.\ Amer.\ Math.\
  Soc.\ \textbf{15} (1964), 681--682.

\bibitem{MR1441487}
V.K.\ Rao, \emph{Homotopy nilpotent {L}ie groups have no torsion in homology},
  Manuscripta Math.\ \textbf{92} (1997), 455--462.

\bibitem{Rector}
D.L.\ Rector, \emph{Subgroups of finite dimensional topological groups},
  J.\ Pure and App.\ Alg.\ \textbf{1} (1971), 253--273.

\bibitem{MR1307623}
J.J.~Rotman, \emph{An introduction to the theory of groups},
Fourth edition. Graduate Texts in Mathematics, 148. Springer-Verlag, New York, 1995. xvi+513 pp.

\end{thebibliography}
\providecommand{\bysame}{\leavevmode\hbox to3em{\hrulefill}\thinspace}
\providecommand{\MR}{\relax\ifhmode\unskip\space\fi MR }
\providecommand{\MRhref}[2]{%
  \href{http://www.ams.org/mathscinet-getitem?mr=#1}{#2}
}
\providecommand{\href}[2]{#2}



\medskip
\begin{minipage}[t]{8 cm}
Cristina Costoya\\
UDC Computaci\'on\\
Edificio \'Area Cient\'ifica, 3.04\\
15071 A Coru\~na, Spain\\
\textit{E-mail:}\texttt{\,cristina.costoya@udc.es}\\
\end{minipage}
\begin{minipage}[t]{8 cm}
J\'er\^ome Scherer\\
EPFL SB MATHGEOM\\
Station 8, MA B3 455\\
CH - 1015 Lausanne, Switzerland\\
\textit{E-mail:}\texttt{\,jerome.scherer@epfl.ch}\\
\end{minipage}

\medskip
\begin{minipage}[t]{8cm}
Antonio Viruel\\
UMA \'Algebra, Geometr\'ia y Topolog\'ia\\
Campus Teatinos \\
29071 M\'alaga, Spain\\
\textit{E-mail:}\texttt{\,viruel@uma.es}\\
\end{minipage}

\end{document}